\DeclarePairedDelimiter\ket{\lvert}{\rangle}
\DeclarePairedDelimiterX\braket[2]{\langle}{\rangle}{#1 \delimsize\vert #2}
\DeclarePairedDelimiterX\inner[2]{\langle}{\rangle}{#1,#2}
\definecolor{Myblue}{rgb}{0,0,0.6}
\newtheorem{theorem}{Theorem}[section]
\newtheorem{proposition}[theorem]{Proposition}
\newtheorem{lemma}[theorem]{Lemma}
\newtheorem{corollary}[theorem]{Corollary}
\newtheoremstyle{example}{\topsep}{\topsep}
	{}
	{}
	{\bfseries}
	{.}
	{2pt}
	{\thmname{#1}\thmnumber{ #2}\thmnote{ #3}}
	\theoremstyle{example}
	\newtheorem{definition}[theorem]{Definition}
	\newtheorem{example}[theorem]{Example}
	\newtheorem{remark}[theorem]{Remark}
\numberwithin{equation}{section}
\def\sh{\operatorname{Sh}}
\def\res{\operatorname{Res}}
\def\Ker{\operatorname{Ker}}
\def\Hom{\operatorname{Hom}}
\def\vacu{\ket{\emptyset}}
\DeclareMathOperator{\End}{End}
\DeclareMathOperator{\Spec}{Spec}
\DeclareMathOperator{\Sym}{Sym}
\DeclareMathOperator{\LC}{LC}
\def\int{\bold{int}}
\def\contract{\;\lrcorner\;}
\begin{document}

\def\ScoreOverhang{1pt}

\def\Res{\res\!}
\newcommand{\ud}[1]{\operatorname{d}\!{#1}}
\newcommand{\Ress}[1]{\res_{#1}\!}
\newcommand{\cat}[1]{\mathcal{#1}}
\newcommand{\lto}{\longrightarrow}
\newcommand{\xlto}[1]{\stackrel{#1}\lto}
\newcommand{\mf}[1]{\mathfrak{#1}}
\newcommand{\md}[1]{\mathscr{#1}}
\newcommand{\church}[1]{\underline{#1}}
\newcommand{\prf}[1]{\underline{#1}}
\newcommand{\den}[1]{\llbracket #1 \rrbracket}
\def\l{\,|\,}
\def\sgn{\textup{sgn}}
\def\cont{\operatorname{cont}}

\title{On Sweedler's cofree cocommutative coalgebra}
\author{Daniel Murfet}

\maketitle

\begin{abstract} We give a direct proof of a result of Sweedler describing the cofree cocommutative coalgebra over a vector space, and use our approach to give an explicit construction of liftings of maps into this universal coalgebra. The basic ingredients in our approach are local cohomology and residues.
\end{abstract}

\section{Introduction}

Let $k$ be an algebraically closed field of characteristic zero. Given a vector space $V$ there is a universal cocommutative coalgebra ${!}V$ mapping to $V$ which is sometimes called the cofree cocommutative coalgebra generated by $V$. This is a classical construction going back to Sweedler, and an explicit description follows from his results in \cite{sweedler}: for $V$ finite-dimensional the universal cocommutative coalgebra may be presented as a coproduct
\begin{equation}\label{eq:presentation_intro}
{!} V = \bigoplus_{P \in V} \Sym_P(V)
\end{equation}
where $\Sym_P(V) = \Sym(V)$ is the symmetric coalgebra. The universal map $d: {!} V \lto V$ is defined on components $v_i \in V^{\otimes i}$ by the formula
\[
d|_{\Sym_P(V)}: \Sym_P(V) \lto V, \qquad (v_0,v_1,v_2,\ldots) \longmapsto v_0 P + v_1\,.
\]
While not written explicitly in \cite{sweedler} this is a straightforward exercise using the structure theory of coalgebras developed there. We do this exercise in Appendix \ref{section:compare_sweedler}. However, since Sweedler's description \eqref{eq:presentation_intro} of ${!} V$ seems not to be well-known despite the extensive discussion of cofree cocommutative coalgebras in models of logic \cite{blute,hyland,mellies2} and elsewhere it seems worthwhile to give a more direct proof. 

Our main new result is an explicit description of the lifting of a linear map ${!} W \lto V$ to a morphism of coalgebras ${!} W \lto {!} V$ (Theorem \ref{theorem:describe_lifting}). In \cite{murfet_ll} we explain in more detail why we are interested in such liftings, and give several examples of how the theorem may be used to write down explicit denotations of proofs in linear logic.

Our approach is based on an isomorphism of coalgebras ($n = \dim(V)$)
\begin{equation}\label{eq:lc}
\Sym_P(V) \cong H^{n}_{\mf{m}_P}(\Sym(V^*), \Omega^n_{\Sym(V^*)/k})
\end{equation}
of the symmetric coalgebra with the local cohomology module of top-degree differential forms at the maximal ideal $\mf{m}_P \subseteq \Sym(V^*)$ corresponding to $P$. Elements of this local cohomology module are equivalence classes of meromorphic differential forms at $P$, and both the universal map $d: {!} V \lto V$ and the counit ${!} V \lto k$ are defined in terms of residues.\footnote{One motivation for phrasing things in terms of residues and local cohomology is the hope that the connection between cofree coalgebras, differential operators and residues extends beyond the very simple case of polynomial rings which arises in the case of $k$ a field. This would be very interesting.} This approach has the advantage of being manifestly coordinate-free (because residues are) in contrast to an alternative construction using the isomorphism ${!}( V_1 \oplus V_2 ) \cong {!} V_1 \otimes {!} V_2$ and a reduction to the case of $V$ one-dimensional (see Remark \ref{remark_additive_iso}).

\vspace{0.2cm}

\emph{Acknowledgements.} Thanks to Nils Carqueville and Jesse Burke, and to the referee who made several helpful suggestions on how to improve the exposition.

\section{The universal cocommutative coalgebra}\label{section:expmod}

Let $k$ be an algebraically closed field of characteristic zero.\footnote{Note that the original approach of Sweedler in Appendix \ref{section:compare_sweedler} works in any characteristic, but the statement is more complicated.} Throughout all coalgebras are coassociative, cocommutative and counital. We begin with $V$ finite-dimensional, but the results will immediately generalise; see Section \ref{section:infinite_dim}. For general background on commutative algebra we recommend \cite{eisenbud}, and for the basic theory of coalgebras \cite{sweedler}. Other references on cofree coalgebras are \cite{getzler, anel, block-leroux, hazewinkel, smith, barr}.
\\

Let $V$ be a finite-dimensional vector space, $R = \Sym(V^*)$ the symmetric algebra of the dual $V^*$. Since $k$ is algebraically closed the maximal ideals of $R$ are in canonical bijection with the elements of $V$, and for $P \in V$ we denote the corresponding maximal ideal $\mf{m}_P$. 

We define $\LC(V,P)$ to be the local cohomology module
\[
\LC(V,P) := H^n_{\mf{m}_P}(\Sym(V^*), \Omega^n_{\Sym(V^*)/k})
\]
where $n = \dim(V)$. This is an $R$-module which is infinite-dimensional as a $k$-vector space, elements of which are equivalence classes of meromorphic differential $n$-forms at $P$.

\begin{example} Suppose $V = k \cdot e$ is one-dimensional, with dual basis $x = e^*$ so $R = k[x]$. Then $P \in k$ and $\mf{m}_P = (x-P)$. Let $M$ be the space of meromorphic $1$-forms which are regular away from $P$,
\[
M = \Big\{ \frac{Q(x)}{(x-P)^r} \ud{x} \l Q(x) \in k[x], r \ge 0 \Big\}\,.
\]
Let $M' = k[x] \ud{x}$ be the subspace of regular forms, then $\LC(V,P)$ is the quotient space
\[
\LC( V, P ) = M / M'\,.
\]
The class of a meromorphic form $\frac{Q(x)}{(x-P)^r} \ud{x}$ is denoted by
\begin{equation}\label{eq:gen_frac_1d}
\left[ \frac{Q(x) \ud{x}}{(x-P)^r} \right] \in \LC(V,P)\,.
\end{equation}
From this description it is clear that $(x-P) \cdot \left[ \frac{Q(x) \ud{x}}{(x-P)} \right] = 0$ and that a $k$-basis for $\LC(V,P)$ is given by the equivalence classes $\left[ \frac{\ud{x}}{(x-P)^r} \right]$ for $r > 0$.
\end{example}

In general, elements of $\LC(V,P)$ are equivalence classes of meromorphic $n$-forms regular away from $P$, modulo those forms which are regular at $P$. These equivalence classes are referred to in the literature as \emph{generalised fractions} \cite{Lipman84,Kunz08}, and such a class
\begin{equation}\label{eq:gen_frac_woh}
\left[ \frac{f \ud{r_1} \wedge \cdots \wedge \ud{r_n}}{t_1, \ldots, t_n} \right] \in \LC(V,P)
\end{equation}
may be associated to the following data:
\begin{itemize}
\item An element $f \in R$ and a sequence $r_1,\ldots,r_n \in R$,
\item A sequence of elements $t_1,\ldots,t_n \in R$ which have an isolated common zero at $P$, or equivalently, they form a regular sequence in $R_{\mf{m}_P}$.
\end{itemize}
These classes in local cohomology satisfy natural identities generalising the one-dimensional case, for instance
\begin{gather*}
t_i \cdot \left[ \frac{f \ud{r_1} \wedge \cdots \wedge \ud{r_n}}{t_1^{a_1}, \ldots, t_i^{a_i}, \ldots, t_n^{a_n}} \right] = \left[ \frac{f \ud{r_1} \wedge \cdots \wedge \ud{r_n}}{t_1^{a_1}, \ldots, t_i^{a_i-1}, \ldots, t_n^{a_n}} \right]\,,\\
t_i \cdot \left[ \frac{f \ud{r_1} \wedge \cdots \wedge \ud{r_n}}{t_1, \ldots, t_n} \right] = 0\,.
\end{gather*}
It is important to note that the definition of local cohomology and the generalised fraction \eqref{eq:gen_frac_woh} associated to the data $f,r_1,\ldots,r_n,t_1,\ldots,t_n$ does not require a choice of coordinates. However, by choosing coordinates we can reduce the general case to the one-dimensional case, and in this way obtain a $k$-basis for $\LC(V,P)$:

\begin{example}\label{example:coordinates_V} Let $e_1,\ldots,e_n$ be a basis of $V$ with dual basis $x_i = e_i^*$ and identify $R$ with the polynomial ring $k[x_1,\ldots,x_n]$. Then for $P \in V$ with coordinates $(P_1,\ldots,P_n)$ in this basis there is a corresponding maximal ideal
\[
\mf{m}_P = (x_1 - P_1, \ldots, x_n - P_n) \in \Spec(R)
\]
and if we set $z_i = x_i - P_i$ then there are generalised fractions
\begin{equation}\label{eq:basis_of_gen_frac}
\left[ \frac{\ud{z_1} \wedge \cdots \wedge \ud{z_n}}{z_1^{a_1+1}, \ldots, z_n^{a_n+1}} \right] \in \LC(V, P)\,.
\end{equation}
Moreover there is an isomorphism of $R$-modules
\begin{gather}
\LC( k \cdot e_1, P_1 ) \otimes \cdots \otimes \LC( k \cdot e_n, P_n ) \lto \LC( V, P )\,,\nonumber\\
\left[ \frac{\ud{z_1}}{z_1^{a_1+1}} \right] \otimes \cdots \otimes \left[ \frac{\ud{z_n}}{z_n^{a_n+1}} \right] \mapsto \left[ \frac{\ud{z_1} \wedge \cdots \wedge \ud{z_n}}{z_1^{a_1+1}, \ldots, z_n^{a_n+1}} \right]\label{eq:one_to_general}
\end{gather}
and the elements \eqref{eq:basis_of_gen_frac} for $a_1,\ldots,a_n \ge 0$ form a $k$-basis for $\LC(V,P)$. It will simplify the notation to write $\ud{\underline{z}} = \ud{z_1} \wedge \cdots \wedge \ud{z_n}$ and $\underline{z} = z_1, \ldots, z_n$ so that \eqref{eq:basis_of_gen_frac} may be abbreviated
\begin{equation}\label{eq:basic_residue_symbols}
\left[ \frac{1}{z_1^{a_1}, \ldots, z_n^{a_n}} \frac{\ud{\underline{z}}}{\underline{z}} \right] := \left[ \frac{\ud{z_1} \wedge \cdots \wedge \ud{z_n}}{z_1^{a_1+1}, \ldots, z_n^{a_n+1}} \right]\,.
\end{equation}
In terms of this basis, the $R$-module structure on $\LC(V,P)$ is defined by
\begin{equation}\label{eq:Ractionres}
z_j \cdot \left[ \frac{1}{z_1^{a_1}, \ldots, z_n^{a_n}} \frac{\ud{\underline{z}}}{\underline{z}} \right] = \left[ \frac{1}{z_1^{a_1}, \ldots, z_j^{a_j-1}, \ldots, z_n^{a_n}} \frac{\ud{\underline{z}}}{\underline{z}} \right]
\end{equation}
where we observe that the right hand side is zero if $a_j = 0$.
\end{example}

A fundamental result about local cohomology states that there is a scalar associated to each meromorphic form, its \emph{residue}, which is defined in a coordinate-independent way. When $k = \mathbb{C}$ and $n = 1$, this is the usual Cauchy integral around a pole. There is a vast generalisation to general schemes, known as the Grothendieck residue symbol \cite{residuesduality, lipman_notes} but we need only the simplest aspects of the theory.

We say that a sequence $z_1,\ldots,z_n \in R$ gives \emph{local coordinates at $P$} if the $\mf{m}_P$-adic completion of $R_{\mf{m}_P}$ is isomorphic to $k \llbracket z_1,\ldots,z_n \rrbracket$.


  
\begin{theorem}\label{theorem:compute_res} There is a canonically defined linear map
\[
\Res_{P}: \LC(V,P) \lto k
\]
with the property that for any sequence $z_1,\ldots,z_n \in R$ giving local coordinates at $P$,
\[
\Res_{P} \left[ \frac{1}{z_1^{a_1}, \ldots, z_n^{a_n}} \frac{\ud{\underline{z}}}{\underline{z}} \right] = \begin{cases} 1 & a_1 = \cdots = a_n = 0 \\ 0 & \textup{otherwise} \end{cases}
\]
\end{theorem}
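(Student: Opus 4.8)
The plan is to define $\Res_P$ intrinsically as the projection onto the cokernel of the de~Rham differential acting on local cohomology, and then to pin down the normalisation by a change-of-coordinates argument.

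First I would reduce to the complete local ring. Local cohomology depends only on the completion, so $\LC(V,P) \cong H^n_{\mf{m}}(\widehat{R}, \widehat{\Omega}^n)$ where $\widehat{R} \cong k\llbracket z_1,\ldots,z_n\rrbracket$ for any system of local coordinates at $P$ (a sequence gives local coordinates precisely when it is a regular system of parameters generating $\mf{m}_P \widehat{R}$). In this description $H^n_{\mf{m}}(\widehat{R})$ is the inverse-monomial module spanned by the $z^{-\mathbf{c}} = z_1^{-c_1}\cdots z_n^{-c_n}$ with every $c_i \ge 1$, the top forms give $H^n_{\mf{m}}(\widehat{\Omega}^n) = H^n_{\mf{m}}(\widehat{R})\cdot \ud{\underline{z}}$, and the basis \eqref{eq:basic_residue_symbols} is recovered by setting $\mathbf{c} = (a_1+1,\ldots,a_n+1)$.

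The crux is a de~Rham computation. Writing $\omega_j = (-1)^{j-1}\ud{z_1}\wedge\cdots\wedge\widehat{\ud{z_j}}\wedge\cdots\wedge\ud{z_n}$, the exterior derivative satisfies $d(f\,\omega_j) = \partial_j f \cdot \ud{\underline{z}}$, and therefore induces on local cohomology a map $d\colon H^n_{\mf{m}}(\widehat{\Omega}^{n-1}) \lto H^n_{\mf{m}}(\widehat{\Omega}^n)$ with
\[
d(z^{-\mathbf{c}}\,\omega_j) = -c_j\, z^{-(\mathbf{c}+\mathbf{1}_j)}\,\ud{\underline{z}},
\]
where $\mathbf{1}_j$ is the $j$-th unit vector. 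Since each $c_j \ge 1$ is invertible (this is where characteristic zero enters), the image of $d$ is spanned by all $z^{-\mathbf{d}}\,\ud{\underline{z}}$ with $d_i \ge 1$ for all $i$ and $d_j \ge 2$ for some $j$; the unique basis vector excluded is $z^{-(1,\ldots,1)}\,\ud{\underline{z}} = \left[\ud{\underline{z}}/\underline{z}\right]$. Hence $\Coker(d)$ is one-dimensional, generated by the class of this element, and I would \emph{define} $\Res_P$ as the composite $\LC(V,P) \twoheadrightarrow \Coker(d) \cong k$ normalised so that $\left[\ud{\underline{z}}/\underline{z}\right] \mapsto 1$. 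The stated formula is then immediate: every basis element with some $a_j > 0$ lies in $\im(d)$ and so has residue zero, while the element with all $a_i = 0$ is the chosen generator.

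The main obstacle is canonicity. Because the de~Rham differential is intrinsic, the subspace $\im(d)$ and the quotient $\Coker(d)$ are independent of coordinates, so $\Res_P$ is automatically coordinate-independent up to a scalar; what remains is to show the scalar is $1$, i.e. that for a second system of local coordinates $w_1,\ldots,w_n$ the class of $\ud{w_1}\wedge\cdots\wedge\ud{w_n}/(w_1\cdots w_n)$ agrees with that of $\ud{\underline{z}}/\underline{z}$ in $\Coker(d)$. I would factor the formal change of coordinates into an invertible linear part and a part tangent to the identity. For the latter, writing $w_i = z_i + h_i$ with $h_i \in \mf{m}^2$, the interpolation $w_i^s = z_i + s\,h_i$ over $k[s]$ keeps the $w^s$ a system of local coordinates (its Jacobian is a unit); the meromorphic top form $\eta_s = \ud{w_1^s}\wedge\cdots\wedge\ud{w_n^s}/(w_1^s\cdots w_n^s)$ is closed for trivial degree reasons, and a standard Lie-derivative (Cartan homotopy) computation shows that $\tfrac{d}{ds}\eta_s$ lies in $\im(d)$; hence the class of $\eta_s$ in $\Coker(d)$ is constant in $s$ and equals that of $\ud{\underline{z}}/\underline{z}$ at $s=0$. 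For the linear part it remains to check $\Res_P\!\left[\frac{\ud{\underline{z}}}{\ell_1,\ldots,\ell_n}\right] = \det(L)^{-1}$ for linear forms $\ell_i = \sum_j L_{ij}z_j$, which one reduces by triangularising $L$ to the classical one-variable residue. An alternative to this last paragraph is simply to invoke the coordinate-independence of the Grothendieck residue symbol from \cite{residuesduality, lipman_notes}, but the de~Rham description makes transparent why the value is forced.
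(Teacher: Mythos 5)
The paper offers no proof of this theorem: it is stated as a citation to Lipman and Kunz, where $\Res_P$ is obtained from local duality and the trace map. Your proposal therefore supplies an actual argument, and it is essentially correct: it is the de~Rham characterisation of the residue in characteristic zero. The key computation $d(z^{-\mathbf{c}}\,\omega_j) = -c_j\,z^{-(\mathbf{c}+\mathbf{1}_j)}\,\ud{\underline{z}}$ does show that $\im(d)$ is spanned by the monomial classes with some exponent $\ge 2$, so $\Coker(d)$ is one-dimensional and the displayed formula is forced once the generator is normalised; and you correctly identify the real content as the coordinate-independence of the class of $\left[\ud{\underline{z}}/\underline{z}\right]$ in $\Coker(d)$. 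Two details there deserve care: the homotopy argument should be run inside $\LC(V,P)\otimes_k k[s]$, where the class of $\eta_s$ is a polynomial in $s$ with vanishing derivative, hence constant (characteristic zero again); and for the linear part, ``triangularising'' must mean row-reduction via the transformation law for generalised fractions (elementary matrices act trivially, diagonal ones reduce to the one-variable identity $\left[\ud{z}/\lambda z\right]=\lambda^{-1}\left[\ud{z}/z\right]$), not conjugation, which would be circular. What your route buys is a self-contained proof that makes the normalisation and the role of characteristic zero transparent; what the cited local-duality construction buys is uniformity over arbitrary characteristic and non-regular base rings, neither of which this paper needs.
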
 
\begin{proof}
See \cite[\S 5.3]{Lipman01}, \cite[pp.64--67]{Lipman84} or \cite{Kunz08}. Notice that in the situation of Example \ref{example:coordinates_V} the isomorphism \eqref{eq:one_to_general} identifies $\Res_{P} = \Res_{P_1} \otimes \cdots \otimes \Res_{P_n}$ where
\[
\Res_{P_i} \left[ \frac{\ud{x}}{(x_i-P_i)^r} \right] = \delta_{r,1}\,.
\]
This defines a $k$-linear map $\Res_P$, and in order to prove that this is \emph{canonical} one has to check that it does not depend on the choice of local coordinates at $P$ (which is straightforward in this case, because $R$ is a polynomial ring).
\end{proof}

\begin{remark} In the case $k = \mathbb{C}$ the residue may be defined as the integral of a meromorphic $n$-form over an $n$-cycle which avoids its poles, see for instance \cite[Chapter V]{Griffiths}.
\end{remark}

\begin{definition} Given a $k$-algebra $S$, a linear map $\mu: S \lto k$ is called \emph{continuous} if it vanishes on an ideal of finite codimension, that is, if $\mu( I ) = 0$ for an ideal $I \subseteq S$ with $S/I$ a finite-dimensional $k$-vector space. We denote the module of all continuous maps by $\Hom_k^{\cont}(S,k)$, this module is often also written $S^{\circ}$ \cite[Chapter 6]{sweedler}.
\end{definition}

For a maximal ideal $\mf{m} \subseteq R$ an ideal in $R_{\mf{m}}$ has finite codimension if and only if it contains a power of $\mf{m}$. Moreover, note that since $R/\mf{m}^i \cong R_{\mf{m}}/\mf{m}^i R_{\mf{m}}$
\begin{equation}\label{eq:contmap}
\Hom_k^{\cont}(R_{\mf{m}},k) = \varinjlim_i \Hom_k( R/\mf{m}^i, k)\,.
\end{equation}

\begin{theorem}\label{theorem:lcwithcontmap} For $P \in V$ there is a canonical isomorphism of $R$-modules
\begin{gather}
\LC(V,P) \lto \Hom_k^{\cont}(R_{\mf{m}_P},k)\label{eq:local_duality}\\
\eta \longmapsto \Res_P( ( - ) \cdot \eta )\,.\nonumber
\end{gather}
\end{theorem}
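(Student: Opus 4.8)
The plan is to establish the three things that the statement demands of the assignment $\Phi\colon \eta \mapsto \Res_P((-)\cdot \eta)$ — that it lands in the continuous dual, that it is $R$-linear, and that it is bijective — and to reduce all three to the explicit residue formula of Theorem \ref{theorem:compute_res}. Throughout I would fix the coordinates of Example \ref{example:coordinates_V}, writing $\eta_a := \left[ \frac{1}{z_1^{a_1},\ldots,z_n^{a_n}} \frac{\ud{\underline z}}{\underline z} \right]$ for the basis element indexed by a multi-index $a = (a_1,\ldots,a_n) \ge 0$, so that $\{\eta_a\}_a$ is a $k$-basis of $\LC(V,P)$.

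First I would check well-definedness. Since $\LC(V,P) = H^n_{\mf{m}_P}(R,\Omega^n_{R/k})$ is $\mf{m}_P$-power-torsion as an $R_{\mf{m}_P}$-module, any fixed $\eta$ is annihilated by some power $\mf{m}_P^i$. The functional $r \mapsto \Res_P(r\eta)$ then vanishes on $\mf{m}_P^i R_{\mf{m}_P}$ and factors through the finite-dimensional quotient $R_{\mf{m}_P}/\mf{m}_P^i \cong R/\mf{m}_P^i$; by the description \eqref{eq:contmap} it therefore lies in $\Hom_k^{\cont}(R_{\mf{m}_P},k)$. The $R$-linearity is then immediate from commutativity: writing out $\Phi(s\eta)(r) = \Res_P(rs\cdot\eta) = \Phi(\eta)(rs) = (s\cdot\Phi(\eta))(r)$ recovers exactly the contragredient $R$-module structure on the continuous dual.

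The bijectivity is where Theorem \ref{theorem:compute_res} does the work. Iterating the action \eqref{eq:Ractionres} gives $z^b \cdot \eta_a = \eta_{a-b}$ when $a - b \ge 0$ componentwise and $0$ otherwise, so by the residue formula $\Res_P(z^b \cdot \eta_a) = \delta_{a,b}$. On the target side, \eqref{eq:contmap} identifies $\Hom_k^{\cont}(R_{\mf{m}_P},k)$ with the direct limit of the finite-dimensional duals $(R/\mf{m}_P^i)^*$, which has $k$-basis the functionals $\delta_b$ dual to the monomials $\{z^b\}$; a continuous functional is determined by its values on this monomial basis of $R$. The computation shows $\Phi(\eta_a) = \delta_a$, so $\Phi$ carries the basis $\{\eta_a\}$ bijectively onto the basis $\{\delta_b\}$ and is an isomorphism.

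The main obstacle is, honestly, conceptual rather than computational: this theorem is an instance of Grothendieck local duality, both sides being canonically the injective hull $E(k)$ of the residue field, and one could argue abstractly that way. The explicit residue route above is self-contained given the tools already in hand, and the only points needing care are the torsion argument securing continuity and the verification via \eqref{eq:contmap} that the $\delta_b$ genuinely exhaust the continuous dual. The real content has been front-loaded into the \emph{coordinate-independence} of $\Res_P$ asserted in Theorem \ref{theorem:compute_res}: that is precisely what guarantees that the isomorphism, though constructed here in coordinates, is in fact canonical as claimed.
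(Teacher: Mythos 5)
Your proposal is correct and follows essentially the same route as the paper: both reduce to the filtration by powers of $\mf{m}_P$ via \eqref{eq:contmap} and then invoke the explicit basis of generalised fractions together with the residue formula of Theorem \ref{theorem:compute_res}. The only (cosmetic) difference is that you identify the image of each basis element $\eta_a$ with the dual basis functional $\delta_a$ directly, whereas the paper establishes the finite-level isomorphism by a dimension count plus an injectivity check using Lemma \ref{lemma:residue_differentiates_0} before passing to the direct limit.
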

\begin{proof}
This is a result of the theory of local duality, see \cite{Lipman01, Kunz08}. We appreciate that these references may be daunting to the non-specialist, so here is a sketch: one can check that with $\mf{m} = \mf{m}_P$ the map
\begin{equation}\label{eq:local_duality_1}
\Hom_R( R/\mf{m}^i, \LC(V,P) ) \lto \Hom_k( R/\mf{m}^i, k ), \qquad \alpha \longmapsto \Res_P \circ \alpha
\end{equation}
is an isomorphism, either abstractly \cite[Proposition 0.4]{Salas02} or by noting that the left hand side is the submodule of all elements of $\LC(V,P)$ killed by $\mf{m}^i = (z_1,\ldots,z_n)^i$ and a $k$-basis for this may be found among the generalised fractions in \eqref{eq:basis_of_gen_frac}. In this way we see easily that both sides of \eqref{eq:local_duality_1} have the same dimension, and injectivity of the map may be checked using Lemma \ref{lemma:residue_differentiates_0} below.

Any given generalised fraction is annihilated by all sufficiently high degree monomials in the $z_i$, as is apparent from \eqref{eq:Ractionres}, so the direct limit over all $i$ yields an isomorphism
\[
\LC(V,P) \cong \Hom_k^{\cont}(R_{\mf{m}_P}, k)\,,
\]
as claimed.
\end{proof}

Let $\cat{D}(R)$ denote the ring of $k$-linear differential operators on $R$. This may be defined as a subalgebra of $\End_k(R)$ using only the $k$-algebra structure on $R$ \cite{milicic}, but it may also be presented in a more straightforward way using the coordinates of Example \ref{example:coordinates_V} as the associative $k$-algebra generated by $x_i$ and $\partial_i = \frac{\partial}{\partial x_i}$ for $1 \le i \le n$ subject to the relations
\[
[x_i, x_j] = [\partial_i, \partial_j] = 0, \qquad [\partial_i, x_j] = \delta_{ij}\,.
\]
Observe that $D = \cat{D}(R)$ is a noncommutative ring, and that $R$ is a left $D$-module, with the $x_i$ acting by left multiplication and the $\partial_i$ acting as partial derivatives. Moreover $D$ is a filtered ring, with $D^{\le r}$ the differential operators of order $\le r$.

If $\mf{m} \subseteq R$ is a maximal ideal then $D^{\le r} \mf{m}^{i+r} \subseteq \mf{m}^i$, so that any differential operator $\rho \in D^{\le r}$ determines a linear map $R/\mf{m}^{i+r} \lto R/\mf{m}^i$ and thus an action on the direct limit \eqref{eq:contmap}. The action is induced by dualising the following map of inverse systems:
\[
\xymatrix{
\cdots \ar[r] & R/\mf{m}^{i+r} \ar[r] \ar[d]^\rho & \cdots \ar[r] & R/\mf{m}^{2+r} \ar[r]\ar[d]^\rho & R/\mf{m}^{1+r} \ar[d]^\rho\\
\cdots \ar[r] & R/\mf{m}^{i} \ar[r] & \cdots \ar[r] & R/\mf{m}^2 \ar[r] & R/\mf{m}
}
\]
This defines a \emph{right} $D$-module structure on $\Hom_k^{\cont}(R_{\mf{m}}, k)$ which sends a continuous map $\mu: R_{\mf{m}} \lto k$ factoring as $R_{\mf{m}} \lto R_{\mf{m}}/\mf{m}^i R_{\mf{m}} \cong R/\mf{m}^i$ followed by $\mu': R/\mf{m}^i \lto k$ to the following continuous map
\[
\xymatrix{
R_{\mf{m}} \ar[r] & R_{\mf{m}}/\mf{m}^{i+r}R_{\mf{m}} \cong R/\mf{m}^{i+r} \ar[r]^-{\rho} & R/\mf{m}^i \ar[r]^-{\mu'} & k
}\,.
\]
Using the isomorphism of Theorem \ref{theorem:lcwithcontmap} we conclude that:

\begin{lemma}\label{lemma:dmodulestructure} $\LC(V,P)$ is canonically a right $\cat{D}(R)$-module.
\end{lemma}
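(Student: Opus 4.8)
The plan is to transport a right $\cat{D}(R)$-module structure along the canonical isomorphism of Theorem~\ref{theorem:lcwithcontmap}, so that the real content is to equip the continuous dual $\Hom_k^{\cont}(R_{\mf{m}_P},k)$ with such a structure. The source of this structure is the fact already recorded that $R$ is a \emph{left} $\cat{D}(R)$-module: the continuous dual of a left module is naturally a right module, and passing through local duality this should reflect, more conceptually, that $\Omega^n_{R/k}$ is a right $\cat{D}(R)$-module and that local cohomology inherits this. I would not invoke that machinery, however, since the dual route is self-contained given Theorem~\ref{theorem:lcwithcontmap}.

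Concretely, writing $\mf{m} = \mf{m}_P$ and $D = \cat{D}(R)$, I would define for $\mu \in \Hom_k^{\cont}(R_\mf{m},k)$ and $d \in D$ the functional
\[
(\mu \cdot d)(r) = \mu(d \cdot r),
\]
where $d$ acts through the left $D$-module structure, and the computation takes place on the finite quotients $R/\mf{m}^i \cong R_\mf{m}/\mf{m}^i R_\mf{m}$. The first thing to verify is that this again lands in the continuous dual. Using the order filtration on $D$---a differential operator of order $\le \ell$ satisfies $d\,\mf{m}^{i+\ell} \subseteq \mf{m}^i$, the order-one case being the inclusion $D\mf{m}^{i+1}\subseteq\mf{m}^i$ noted before the statement---one sees that if $\mu$ vanishes on $\mf{m}^N$ then $\mu \cdot d$ vanishes on $\mf{m}^{N+\ell}$, hence is continuous. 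In the language of \eqref{eq:contmap} this is precisely the dualisation of the map of inverse systems $R/\mf{m}^{i+\ell} \lto R/\mf{m}^i$ induced by $d$, recovering the shift-by-one map displayed before the statement when $d$ has order one.

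It then remains to check the module axioms, and here the formula does the work for us: $k$-bilinearity and $\mu \cdot 1 = \mu$ are immediate, while
\[
((\mu \cdot d) \cdot d')(r) = \mu\bigl(d \cdot (d' \cdot r)\bigr) = \mu\bigl((d d') \cdot r\bigr) = (\mu \cdot (d d'))(r)
\]
gives $(\mu \cdot d) \cdot d' = \mu \cdot (d d')$, so the convention of evaluating the operator on the argument turns the associative left action on $R$ into a genuine \emph{right} action on the dual. Transporting along Theorem~\ref{theorem:lcwithcontmap} then equips $\LC(V,P)$ with a right $D$-module structure, characterised by the adjunction $\Res_P\bigl(r \cdot (\eta \cdot d)\bigr) = \Res_P\bigl((d \cdot r) \cdot \eta\bigr)$ with respect to the residue pairing.

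I expect the only genuinely delicate point to be the well-definedness just discussed: confirming that applying a differential operator before evaluating a continuous functional again yields a continuous functional, compatibly across the direct limit \eqref{eq:contmap}. Everything else is formal bookkeeping. Finally, because both the isomorphism of Theorem~\ref{theorem:lcwithcontmap} and the pullback construction above are manifestly independent of the choice of coordinates, the resulting structure is canonical, as claimed.
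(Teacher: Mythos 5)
Your proposal is correct and follows essentially the same route as the paper: the action is defined by precomposing a continuous functional with the left action of a differential operator on $R$, checked to preserve continuity via the behaviour of $D$ on powers of $\mf{m}$, realised as the dual of a map of inverse systems on the $R/\mf{m}^i$, and then transported to $\LC(V,P)$ through Theorem~\ref{theorem:lcwithcontmap}. Your use of the order filtration ($d\,\mf{m}^{i+\ell}\subseteq\mf{m}^i$ for $d$ of order $\le\ell$) is in fact slightly more careful than the paper's displayed inclusion, which as literally stated only holds for operators of order at most one, but this does not change the argument.
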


To describe the action more concretely, we use:

\begin{lemma}\label{lemma:residue_differentiates_0} For $f \in R$,
\begin{equation}\label{eq:residue_differentiates2}
\Res_P\left[ \frac{f}{z_1^{a_1}, \ldots, z_n^{a_n}} \frac{\ud{\underline{z}}}{\underline{z}} \right] = \frac{1}{{a_1}! \cdots {a_n}!} \frac{\partial^{a_1}}{\partial x_1^{a_1}} \cdots \frac{\partial^{a_n}}{\partial x_n^{a_n}}(f) |_{x = P}\,.
\end{equation}
\end{lemma}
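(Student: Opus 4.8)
The plan is to reduce everything to Theorem \ref{theorem:compute_res} by Taylor-expanding $f$ about the point $P$ and then using the explicit $R$-module action \eqref{eq:Ractionres}. First I would rewrite $f$ as a polynomial in the local coordinates $z_i = x_i - P_i$. Since these differ from the $x_i$ only by constants we have $\partial/\partial x_i = \partial/\partial z_i$, and the multivariate Taylor expansion (a finite sum, as $f$ is a polynomial) reads
\[
f = \sum_{b_1, \ldots, b_n \ge 0} c_b\, z_1^{b_1} \cdots z_n^{b_n}, \qquad c_b = \frac{1}{b_1! \cdots b_n!} \frac{\partial^{b_1}}{\partial x_1^{b_1}} \cdots \frac{\partial^{b_n}}{\partial x_n^{b_n}}(f)|_{x = P}\,.
\]

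Next I would observe that the generalised fraction with $f$ in the numerator is, by the defining compatibility of the notation, the image of the basic residue symbol \eqref{eq:basic_residue_symbols} under the $R$-module action of $f$; the plan is then to compute that action term by term using \eqref{eq:Ractionres}. Each monomial lowers the exponents, so that
\[
z_1^{b_1} \cdots z_n^{b_n} \cdot \left[ \frac{1}{z_1^{a_1}, \ldots, z_n^{a_n}} \frac{\ud{\underline{z}}}{\underline{z}} \right] = \left[ \frac{1}{z_1^{a_1 - b_1}, \ldots, z_n^{a_n - b_n}} \frac{\ud{\underline{z}}}{\underline{z}} \right]\,,
\]
with the convention inherited from \eqref{eq:Ractionres} that the right-hand side vanishes whenever some $a_j - b_j < 0$. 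Summing over $b$ then expresses $\left[ \frac{f}{z_1^{a_1}, \ldots, z_n^{a_n}} \frac{\ud{\underline{z}}}{\underline{z}} \right]$ as a finite $k$-linear combination of basic residue symbols.

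Finally I would apply $\Res_P$ and invoke Theorem \ref{theorem:compute_res}: the residue of the symbol with lowered exponents $a_j - b_j$ equals $1$ when every $a_j - b_j = 0$ and $0$ otherwise. Hence the only term surviving in the sum is the one with $b = a$, whose coefficient is $c_a$, and reading off $c_a$ from the Taylor expansion above yields precisely the asserted formula \eqref{eq:residue_differentiates2}.

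I do not anticipate a serious obstacle: the content is a direct computation once the module action and Theorem \ref{theorem:compute_res} are in hand. The points requiring a little care are the bookkeeping of the vanishing conventions (making sure that monomials which would force some exponent negative genuinely contribute nothing) and the identification of the numerator $f$ with the $R$-action of $f$ on the basic symbol, together with the harmless replacement of $\partial/\partial x_i$ by $\partial/\partial z_i$.
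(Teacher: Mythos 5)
Your proposal is correct and follows essentially the same route as the paper: expand $f$ in monomials of the local coordinates $z_i$ (the paper reduces to a single monomial by linearity, you do the explicit Taylor expansion), act on the basic symbol via \eqref{eq:Ractionres}, and apply Theorem \ref{theorem:compute_res} so that only the term with exponents matching $(a_1,\ldots,a_n)$ survives. Your version merely makes explicit the identification of the surviving Taylor coefficient with the right-hand side of \eqref{eq:residue_differentiates2}, which the paper leaves as ``both sides obviously agree.''
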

\begin{proof}
We may assume $f$ is a monomial in the $z_i$. The action of $f$ on a generalised fraction is described by \eqref{eq:Ractionres} and consulting Theorem \ref{theorem:compute_res} we see that both sides of \eqref{eq:residue_differentiates2} vanish unless $f$ has $z_i$-degree $= a_i$ for every $i$. Thus we need only consider the case where $f$ is $\lambda z_1^{a_1} \cdots z_n^{a_n}$ for some $\lambda \in k$, but then both sides obviously agree.
\end{proof}

The action in Lemma \ref{lemma:dmodulestructure} may be described in the coordinates of Example \ref{example:coordinates_V} as follows: the generators $x_i = z_i + P_i$ act via the usual $R$-module structure, so
\begin{equation}
\left[ \frac{1}{z_1^{a_1}, \ldots, z_n^{a_n}} \frac{\ud{\underline{z}}}{\underline{z}} \right] \cdot x_i = \left[ \frac{1}{z_1^{a_1}, \ldots, z_i^{a_i-1}, \ldots, z_n^{a_n}} \frac{\ud{\underline{z}}}{\underline{z}} \right] + P_i \left[ \frac{1}{z_1^{a_1}, \ldots, z_n^{a_n}} \frac{\ud{\underline{z}}}{\underline{z}} \right]
\end{equation}
while the differential operators act by
\begin{equation}\label{eq:action_diff_op}
\left[ \frac{1}{z_1^{a_1}, \ldots, z_n^{a_n}} \frac{\ud{\underline{z}}}{\underline{z}} \right] \cdot \partial_i = (a_i + 1) \left[ \frac{1}{z_1^{a_1}, \ldots, z_i^{a_i+1}, \ldots, z_n^{a_n}} \frac{\ud{\underline{z}}}{\underline{z}} \right]\,,
\end{equation}
as can be checked by precomposing \eqref{eq:residue_differentiates2}, viewed as a functional of $f$, with the differential operator $\partial_i$ on $R$. 

At this point the notation is starting to get a bit awkward, so we introduce a second basis for $\LC(V,P)$ which is better suited to our purposes.

\begin{definition} The \emph{vacuum} at $P \in V$ is the generalised fraction
\[
\vacu_P := \left[ \frac{\ud{\underline{z}}}{\underline{z}} \right] = \left[ \frac{\ud{(x_1-P_1)} \wedge \cdots \wedge \ud{(x_n - P_n)}}{(x_1-P_1), \ldots, (x_n-P_n)} \right] \in \LC(V,P)\,.
\]
This element does not depend on the choice of basis $e_1,\ldots,e_n$ for $V$.
\end{definition}

\begin{definition}\label{def:symvtoder} There is a canonical map
\begin{gather*}
V \lto \cat{D}(R),\\
\sum_i a_i e_i \longmapsto \sum_i a_i \partial_i
\end{gather*}
and we denote the image of $\nu \in V$ by $\partial_{\nu}$. This map lands in the commutative subalgebra of $\cat{D}(R)$ generated as a $k$-algebra by the $\partial_i$, there is an extension to a morphism of algebras
\begin{equation}\label{eq:symvtoder}
\Sym(V) \lto \cat{D}(R)\,.
\end{equation}
\end{definition}

The right $\cat{D}(R)$-action together with \eqref{eq:symvtoder} induces a right $\Sym(V)$-module structure on $\LC(V,P)$. Since this ring is commutative we may write its action on the left.

\begin{definition} For any sequence $\nu_1,\ldots,\nu_s \in V$ there is an associated element
\begin{equation}\label{eq:s_particle_state}
\ket{ \nu_1, \ldots, \nu_s }_P := \partial_{\nu_1} \cdots \partial_{\nu_s} \vacu_P \in \LC(V,P)\,.
\end{equation}
Note that since the $\partial_{\nu_i}$ commute, the order of the $\nu_i$ is irrelevant. Using \eqref{eq:action_diff_op} we have, in terms of coordinates,
\begin{equation}\label{eq:defn_kets}
\ket{ e_{i_1}, \ldots, e_{i_s} }_P = a_1! \cdots a_n! \left[ \frac{1}{z_1^{a_1}, \ldots, z_n^{a_n}} \frac{\ud{\underline{z}}}{\underline{z}} \right]
\end{equation}
where $e_i$ occurs $a_i$ times among the $e_{i_t}$. The notation is derived from the fact that the $\partial_i$ and $x_i$ obey the canonical commutation relations, and may therefore be viewed as acting as creation and annihilation operators for particle states on the vacuum $\vacu_P$.
\end{definition}

\begin{lemma}\label{lemma:iso_fock} There is a linear isomorphism 
\begin{gather*}
\Sym(V) \lto \LC(V, P)\\
f \longmapsto f \cdot \vacu_P\,.
\end{gather*}
\end{lemma}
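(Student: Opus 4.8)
The plan is to exhibit the map $f \mapsto f \cdot \ket{0}_P$ as a composite of isomorphisms already established, and to verify it directly on the explicit bases. Recall from Definition \ref{def:symvtoder} that the $\Sym(V)$-action on $\LC(V,P)$ is defined by letting $\nu \in V$ act as $\partial_\nu$, extended multiplicatively. Since the ring $\Sym(V)$ is commutative and generated as a $k$-algebra by $V = \bigoplus_i k\,e_i$, it has a $k$-basis consisting of monomials $e_1^{a_1} \cdots e_n^{a_n}$ with $a_i \ge 0$, and under the proposed map such a monomial is sent to $\partial_1^{a_1} \cdots \partial_n^{a_n} \ket{0}_P = \ket{e_{i_1},\ldots,e_{i_s}}_P$ where $e_i$ occurs $a_i$ times.

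The key computation is already recorded in equation \eqref{eq:defn_kets}: applying \eqref{eq:action_diff_op} repeatedly, one finds
\begin{equation}
e_1^{a_1}\cdots e_n^{a_n} \cdot \ket{0}_P = a_1! \cdots a_n! \left[ \frac{1}{z_1^{a_1}, \ldots, z_n^{a_n}} \frac{\ud{\underline{z}}}{\underline{z}} \right]\,.\nonumber
\end{equation}
Since we are in characteristic zero, the scalars $a_1! \cdots a_n!$ are nonzero, so the map carries the monomial basis of $\Sym(V)$ to nonzero scalar multiples of the basis elements \eqref{eq:basis_of_gen_frac} of $\LC(V,P)$. First I would confirm that $f \mapsto f \cdot \ket{0}_P$ is $k$-linear, which is immediate from the module structure. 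Then, because the basis monomials are sent bijectively (up to nonzero scalars) onto a $k$-basis of $\LC(V,P)$ established in Example \ref{example:coordinates_V}, the map sends a basis to a basis and is therefore a linear isomorphism.

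There is no serious obstacle here; the only points requiring care are that the factorial coefficients are invertible, which is exactly where the characteristic-zero hypothesis enters, and that \eqref{eq:basis_of_gen_frac} is genuinely a $k$-basis, which is asserted in Example \ref{example:coordinates_V}. I would also remark that the isomorphism, although defined using coordinates in the verification, is in fact canonical: the source $\Sym(V)$ and the action of $V$ through $\partial_\nu$ are coordinate-free by Definition \ref{def:symvtoder}, and the vacuum $\ket{0}_P$ is coordinate-independent by construction. Hence the stated map is well-defined independently of the chosen basis, and the coordinate computation merely serves to check bijectivity.
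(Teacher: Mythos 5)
Your proposal is correct and follows essentially the same route as the paper, whose one-line proof simply cites the bijection \eqref{eq:defn_kets} between the monomial basis of $\Sym(V)$ and the generalised fractions; you have merely spelled out the details, including the (correct) observation that the nonvanishing of the factorial coefficients uses characteristic zero.
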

\begin{proof}
A bijection between the monomial basis of $\Sym(V)$ and the generalised fractions forming a basis for $\LC(V,P)$ is given by \eqref{eq:defn_kets}.
\end{proof}

The connection between differential operators and generalised fractions may now be clearly formulated in terms of \emph{integrating against} the latter. The following is a consequence of Lemma \ref{lemma:residue_differentiates_0}:

\begin{lemma}\label{lemma:residue_differentiates} For $f \in R$ and $\nu_1,\ldots,\nu_s \in V$
\begin{equation}\label{eq:residue_differentiates}
\Res_P\Big( f \ket{\nu_1,\ldots,\nu_s}_P \Big) = \partial_{\nu_1} \cdots \partial_{\nu_s}( f )|_{x = P}\,.
\end{equation}
In particular $\Res_P( f \vacu_P ) = f(P)$.
\end{lemma}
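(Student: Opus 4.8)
The plan is to reduce the statement to the case of basis vectors and then simply combine the explicit formula \eqref{eq:defn_kets} for the states $\ket{\nu_1,\ldots,\nu_s}_P$ with Lemma \ref{lemma:residue_differentiates_0}. First I would observe that both sides of \eqref{eq:residue_differentiates} are multilinear in the tuple $(\nu_1,\ldots,\nu_s)$: the left-hand side because $\partial_\nu$ depends linearly on $\nu$ by Definition \ref{def:symvtoder}, so that $\ket{\nu_1,\ldots,\nu_s}_P$ is multilinear in its arguments by \eqref{eq:s_particle_state}, while $\Res_P$ and the $R$-action are linear; and the right-hand side for the same reason. Hence it suffices to treat the case $\nu_j = e_{i_j}$ in which every argument is a basis vector.

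Fixing such a tuple, suppose the basis vector $e_i$ occurs exactly $a_i$ times among the $e_{i_j}$. Then \eqref{eq:defn_kets} identifies $\ket{e_{i_1},\ldots,e_{i_s}}_P$ with $a_1! \cdots a_n!$ times the basic generalised fraction $\left[ \frac{1}{z_1^{a_1}, \ldots, z_n^{a_n}} \frac{\ud{\underline{z}}}{\underline{z}} \right]$. Pulling this scalar out of $\Res_P$ by linearity and applying Lemma \ref{lemma:residue_differentiates_0} to the numerator $f$, the factorials cancel against the normalising factor in \eqref{eq:residue_differentiates2}, leaving $\frac{\partial^{a_1}}{\partial x_1^{a_1}} \cdots \frac{\partial^{a_n}}{\partial x_n^{a_n}}(f)|_{x=P}$. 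On the other hand, writing $\partial_{\nu_j} = \partial_{i_j} = \partial/\partial x_{i_j}$ via Definition \ref{def:symvtoder}, the operator $\partial_{\nu_1}\cdots\partial_{\nu_s}$ is, by commutativity of the $\partial_i$, exactly $\frac{\partial^{a_1}}{\partial x_1^{a_1}} \cdots \frac{\partial^{a_n}}{\partial x_n^{a_n}}$. The two sides therefore agree.

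I do not expect a genuine obstacle here: the substantive work -- reducing to monomial numerators and evaluating the residue as a normalised derivative -- has already been carried out in Lemma \ref{lemma:residue_differentiates_0}, and the only things to watch in the present argument are the bookkeeping of the factorial normalisation built into \eqref{eq:defn_kets} and the harmless reindexing of the iterated derivative using commutativity of the $\partial_i$. If anything, the one point deserving an explicit word is the legitimacy of the multilinear reduction, namely that $\ket{\nu_1,\ldots,\nu_s}_P$ really is multilinear and symmetric in its arguments; but this is immediate from \eqref{eq:s_particle_state} together with the commutativity of the operators $\partial_\nu$ noted there.
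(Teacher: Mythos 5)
Your proof is correct and follows exactly the route the paper intends: the paper simply declares the lemma ``a consequence of Lemma~\ref{lemma:residue_differentiates_0}'', and your argument --- multilinear reduction to basis vectors, then combining \eqref{eq:defn_kets} with \eqref{eq:residue_differentiates2} so the factorials cancel --- is the correct elaboration of that one-line justification. No gaps.
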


Given a subset of indices $I =  \{ i_1, \ldots, i_k \} \subseteq \{ 1, \ldots, s \}$ we write
\begin{equation}\label{eq:nu_I}
\ket{ \nu_I }_P := \ket{ \nu_{i_1}, \ldots, \nu_{i_k} }_P\,.
\end{equation}
By convention this is $\vacu_P$ if $I$ is empty. The complement of $I$ is denoted $I^c$.

\begin{lemma}\label{lemma:lc_coalgebra} $\LC(V,P)$ is a $k$-coalgebra with coproduct
\begin{equation}\label{eq:lc_coalgebra_1}
\Delta \ket{ \nu_1, \ldots, \nu_s }_P = \sum_{I \subseteq \{ 1, \ldots, s \}} \ket{ \nu_I }_P \otimes \ket{ \nu_{I^c} }_P
\end{equation}
and counit $\Res_P: \LC(V,P) \lto k$.
\end{lemma}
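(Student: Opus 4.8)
The plan is to obtain the coalgebra structure by transport along the linear isomorphism $\Sym(V) \cong \LC(V,P)$ of Lemma~\ref{lemma:iso_fock}, under which the ket $\ket{\nu_1,\ldots,\nu_s}_P$ corresponds to the product $\nu_1\cdots\nu_s$. Formula~\eqref{eq:lc_coalgebra_1} is then visibly the standard cocommutative coproduct on the symmetric coalgebra, namely the unique algebra map $\Sym(V)\to\Sym(V)\otimes\Sym(V)$ making every $\nu\in V$ primitive. Rather than merely cite this, I would verify the coalgebra axioms directly from~\eqref{eq:lc_coalgebra_1}, since the combinatorics is short. The first point to settle is that~\eqref{eq:lc_coalgebra_1} really defines a linear map.

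For well-definedness, observe that the right-hand side of~\eqref{eq:lc_coalgebra_1} is symmetric under permutations of the indices $1,\ldots,s$ (a permutation merely relabels the subsets $I$) and is multilinear in $\nu_1,\ldots,\nu_s$ (each $\nu_j$ occurs in exactly one of the two tensor factors of each summand, and the kets are multilinear in their arguments). By the universal property of $\Sym^s(V)$ the assignment therefore factors through a linear map $\Sym^s(V)\to\LC(V,P)\otimes\LC(V,P)$; since the kets of length $s$ span the copy of $\Sym^s(V)$ sitting inside $\LC(V,P)$ under Lemma~\ref{lemma:iso_fock}, and $\LC(V,P)\cong\bigoplus_s\Sym^s(V)$, summing over $s$ yields a well-defined linear coproduct $\Delta$.

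It then remains to check the three axioms. For the counit, Lemma~\ref{lemma:residue_differentiates} with $f=1$ gives $\Res_P\ket{\nu_{I}}_P=\partial_{\nu_{i_1}}\cdots\partial_{\nu_{i_k}}(1)|_{x=P}$, which equals $1$ when $I=\emptyset$ and $0$ otherwise; applying $\Res_P$ to the left tensor factor of~\eqref{eq:lc_coalgebra_1} thus kills every term except $I=\emptyset$ and returns $\ket{\nu_1,\ldots,\nu_s}_P$, and symmetrically on the right, so $\Res_P$ is a counit. For coassociativity, both $(\Delta\otimes 1)\Delta$ and $(1\otimes\Delta)\Delta$ applied to $\ket{\nu_1,\ldots,\nu_s}_P$ expand as the sum of $\ket{\nu_A}_P\otimes\ket{\nu_B}_P\otimes\ket{\nu_C}_P$ over all ordered partitions $\{1,\ldots,s\}=A\sqcup B\sqcup C$ into three (possibly empty) disjoint blocks, so the two agree. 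Cocommutativity follows from~\eqref{eq:lc_coalgebra_1} by the substitution $I\mapsto I^c$, which exchanges the two tensor factors.

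The argument is essentially routine once the transport picture is in place; the only point needing care is the well-definedness step, where one must be sure that the symmetric and multilinear dependence of the right-hand side matches the structure of $\Sym^s(V)$ so that no relation among the kets is violated. I expect this, together with the bookkeeping identifying the two iterated coproducts with sums over ordered three-block set-partitions, to be the only genuine content; everything else is an immediate consequence of Lemmas~\ref{lemma:iso_fock} and~\ref{lemma:residue_differentiates}.
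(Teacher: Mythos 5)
Your argument is correct and establishes the lemma as literally stated, but it takes a genuinely different route from the paper. The paper does not verify the coalgebra axioms combinatorially: it uses \eqref{eq:contmap} and Theorem \ref{theorem:lcwithcontmap}, noting that each $R/\mf{m}_P^i$ is a finite-dimensional commutative algebra, so $\Hom_k(R/\mf{m}_P^i,k)$ is canonically a coalgebra (the dual of an algebra); the direct limit $\Hom_k^{\cont}(R_{\mf{m}_P},k)$ inherits this structure, which is transported to $\LC(V,P)$ along local duality, and the only computation is then to check that the resulting coproduct is given by \eqref{eq:lc_coalgebra_1} on the basis \eqref{eq:defn_kets}. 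Your transport-from-$\Sym(V)$ argument is shorter and self-contained -- coassociativity, cocommutativity and the counit axiom follow from the set-partition bookkeeping you describe, and your well-definedness check via symmetric multilinearity of the kets is the right one. What the paper's route buys, and what your proof leaves to be supplied later, is that \eqref{eq:lc_coalgebra_1} is precisely the coproduct \emph{dual to multiplication} on $R_{\mf{m}_P}$, i.e.\ that $\Res_P(fg\cdot\eta) = \sum \Res_P(f\cdot\eta_{(1)})\,\Res_P(g\cdot\eta_{(2)})$ for all $f,g\in R$. With your definition this identity is not automatic; it amounts to the Leibniz rule via Lemma \ref{lemma:residue_differentiates}. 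It is exactly what makes \eqref{eq:decomp_contmap2} an isomorphism of \emph{coalgebras}, on which the proofs of Theorem \ref{theorem:main} and Proposition \ref{prop:respairing} depend, so if your proof were adopted this compatibility would need to be recorded separately before Appendix \ref{appendix:proofs}.
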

\begin{proof}
This may be deduced from \eqref{eq:contmap} since each $R/\mf{m}^i$ is a finite-dimensional commutative algebra, hence $\Hom_k( R/\mf{m}^i, k )$ is canonically a coalgebra. The induced coalgebra structure on the direct limit passes by Theorem \ref{theorem:lcwithcontmap} to a coalgebra structure on $\LC(V,P)$. On the basis elements \eqref{eq:defn_kets} this structure is easily checked to be given by \eqref{eq:lc_coalgebra_1}.
\end{proof}

For example, $\Delta \ket{\nu}_P = \vacu_P \otimes \ket{\nu}_P + \ket{\nu}_P \otimes \vacu_P$.

\begin{definition}\label{definition:bang} As a coalgebra, $!V$ is given by the coproduct
\begin{equation}
!V = \bigoplus_{P \in V} \LC(V,P)
\end{equation}
with its natural coproduct structure. The sum of the residue maps defines a linear map
\begin{equation}\label{eq:residue_der}
\Res\, = \sum_P \Res_P: {!}V \lto k
\end{equation}
which is the counit of this coalgebra.
\end{definition}

Next we define the universal map ${!} V \lto V$, which we refer to as the \emph{dereliction} map following the terminology used in the semantics of linear logic \cite{mellies}. For each $P \in V$ there is a canonical pairing
\[
\LC(V,P) \otimes_k R \lto k, \qquad \eta \otimes r \longmapsto \Res_P(r \eta)\,.
\]
Using $V^* \subseteq R$ the induced pairing $\LC(V,P) \otimes_k V^* \lto k$ corresponds under adjunction to a linear map $d_P: \LC(V,P) \lto V$, given by $d_P( \eta ) = \sum_i \Res_P( x_i \eta ) e_i$.

\begin{definition} The \emph{dereliction} map $d: {!}V \lto V$ is the linear map $d = \sum_P d_P$.
\end{definition}

In terms of coordinates, using the map of \eqref{eq:residue_der}, $d( \eta ) = \sum_{i=1}^n \Res\,( x_i \eta ) e_i$.

\begin{lemma}\label{lemma:dereliction_describe} $d \vacu_P = P, d \ket{\nu}_P = \nu$ for any $\nu \in V$, while $d \ket{\nu_1,\ldots,\nu_s}_P = 0$ for $s > 1$.
\end{lemma}
\begin{proof}
Both claims follow from Lemma \ref{lemma:residue_differentiates}:
\begin{gather*}
d \vacu_P = \sum_i \Res_P( x_i \vacu_P ) e_i = \sum_i P_i e_i = P\,.\\
d \ket{\nu}_P = \sum_i \Res_P( x_i \ket{\nu}_P ) e_i = \sum_i \partial_\nu( x_i ) e_i = \nu\,.
\end{gather*}
\end{proof}

Recall that the tensor algebra $TV = \bigoplus_{i \ge 0} V^{\otimes i}$ is a bialgebra with coproduct $\Delta$ and counit $\varepsilon$ determined by $\Delta(v) = 1 \otimes v + v \otimes 1$ and $\varepsilon(v) = 0$ for $v \in V$. More explicitly,
\begin{equation}
\Delta(v_1 \otimes \cdots \otimes v_s) = \sum_{i=0}^s \sum_{\sigma \in \operatorname{Sh}_{i,s-i}} ( v_{\sigma(1)} \otimes \cdots \otimes v_{\sigma(i)} ) \otimes ( v_{\sigma(i+1)} \otimes \cdots \otimes v_{\sigma(s)} )
\end{equation}
where $\operatorname{Sh}_{i,s-i}$ is the set of $(i,s-i)$ shuffles. The two-sided ideal $I \subseteq TV$ with $\Sym(V) = TV/I$ is also a coideal, so $\Sym(V)$ is a coalgebra with coproduct
\begin{equation}\label{eq:coalgebra_delta}
\Delta( v_1 \cdots v_s ) = \sum_{I \subseteq \{1,\ldots,s\}} \Big(\prod_{i \in I} v_i\Big) \otimes \Big(\prod_{j \notin I} v_j\Big)\,.
\end{equation}

\begin{lemma}\label{lemma:relate_to_fock} There is a canonical isomorphism of coalgebras
\begin{equation}
{!} V \lto \bigoplus_{P \in V} \Sym_P(V)
\end{equation}
where $\Sym_P(V) = \Sym(V)$. If we define
\[
c_P: \Sym_P(V) \lto V, \qquad c_P( v_0, v_1, v_2, \ldots ) = v_0 P + v_1
\]
then the isomorphism identifies the dereliction map on ${!} V$ with $\sum_P c_P$.
\end{lemma}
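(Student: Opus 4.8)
The plan is to assemble the isomorphism componentwise out of the linear isomorphism of Lemma \ref{lemma:iso_fock}, and then to check that it intertwines coproduct, counit, and dereliction. For each $P \in V$ let $\phi_P \colon \Sym_P(V) \to \LC(V,P)$ be the map $f \mapsto f \cdot \ket{0}_P$ of Lemma \ref{lemma:iso_fock}; on a monomial it sends $v_1 \cdots v_s$ to $\partial_{v_1} \cdots \partial_{v_s} \ket{0}_P = \ket{v_1,\ldots,v_s}_P$ by \eqref{eq:s_particle_state}. This is a linear isomorphism, and it is canonical because $\ket{0}_P$ is independent of coordinates and the $\Sym(V)$-action is defined intrinsically via \eqref{eq:symvtoder}. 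The claimed isomorphism ${!}V \to \bigoplus_P \Sym_P(V)$ will be the inverse of $\bigoplus_P \phi_P$.

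First I would verify that each $\phi_P$ is a morphism of coalgebras. The coproduct compatibility is immediate once the two formulas are placed side by side: applying \eqref{eq:lc_coalgebra_1} to $\ket{v_1,\ldots,v_s}_P$ gives $\sum_I \ket{v_I}_P \otimes \ket{v_{I^c}}_P$, which under the correspondence $\ket{v_I}_P \leftrightarrow \prod_{i \in I} v_i$ is exactly $(\phi_P \otimes \phi_P)$ applied to $\sum_I (\prod_{i\in I} v_i) \otimes (\prod_{j \notin I} v_j)$ from \eqref{eq:coalgebra_delta}, since both sums are indexed by subsets $I \subseteq \{1,\ldots,s\}$. For the counit I would apply Lemma \ref{lemma:residue_differentiates} with $f = 1$, giving $\Res_P \ket{v_1,\ldots,v_s}_P = \partial_{v_1}\cdots\partial_{v_s}(1)|_{x=P}$, which equals $1$ when $s = 0$ and $0$ otherwise, matching the counit $\varepsilon$ of the symmetric coalgebra $\Sym(V)$. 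Taking the direct sum over $P$ then produces the asserted isomorphism of coalgebras.

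It remains to identify the dereliction, for which it suffices to show $d_P \circ \phi_P = c_P$ for each $P$; summing over $P$ gives $d = \sum_P c_P$ as desired. By linearity I only need to check the three homogeneous cases covered by Lemma \ref{lemma:dereliction_describe}. In degree $0$, $\phi_P(1) = \ket{0}_P$ and $d_P \ket{0}_P = P = c_P(1)$; in degree $1$, $\phi_P(\nu) = \ket{\nu}_P$ and $d_P \ket{\nu}_P = \nu = c_P(\nu)$; and in degree $s \ge 2$, $d_P \ket{v_1,\ldots,v_s}_P = 0 = c_P(v_1 \cdots v_s)$. These match the formula $c_P(v_0, v_1, v_2, \ldots) = v_0 \cdot P + v_1$ when the element is decomposed into its graded pieces.

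I do not anticipate a serious obstacle: every ingredient is prepared in the preceding lemmas, and the real content is the observation that the subset-indexed coproduct \eqref{eq:lc_coalgebra_1} on the creation-operator basis literally coincides with the symmetric-coalgebra coproduct \eqref{eq:coalgebra_delta}. The only point demanding care is the counit and grading bookkeeping, namely confirming via Lemma \ref{lemma:residue_differentiates} that $\Res_P$ genuinely plays the role of $\varepsilon$, and checking that the identification of the dereliction respects the grading-based definition of $c_P$ rather than merely agreeing on a spanning set by accident.
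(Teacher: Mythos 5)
Your proposal is correct and follows exactly the route the paper takes: it upgrades the linear isomorphism of Lemma \ref{lemma:iso_fock} to a coalgebra isomorphism by matching the coproducts \eqref{eq:coalgebra_delta} and \eqref{eq:lc_coalgebra_1} on the ket basis, and then reads off the dereliction from Lemma \ref{lemma:dereliction_describe}. The paper compresses all of this into two sentences; your version simply makes the counit and degree-by-degree dereliction checks explicit.
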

\begin{proof}
It is clear from \eqref{eq:coalgebra_delta} and \eqref{eq:lc_coalgebra_1} that Lemma \ref{lemma:iso_fock} is an isomorphism of coalgebras. The rest is immediate from Lemma \ref{lemma:dereliction_describe}.
\end{proof}

\begin{remark}\label{remark_additive_iso} Suppose that $V = V_1 \oplus V_2$. There is an isomorphism of coalgebras
\begin{gather*}
\Psi: {!} V_1 \otimes {!} V_2 \lto {!} V\,,\\
\Psi\left( \ket{\nu_1,\ldots,\nu_s}_P \otimes \ket{\omega_1,\ldots,\omega_t}_Q \right) = \ket{\nu_1,\ldots,\nu_s,\omega_1,\ldots,\omega_t}_{(P,Q)}
\end{gather*}
as may be verified from Lemma \ref{lemma:relate_to_fock} using $\Sym(V_1) \otimes \Sym(V_2) \cong \Sym(V)$. In this way various properties of ${!} V$ may be reduced to the one-dimensional case
\[
{!} ( k \cdot e ) = \bigoplus_{P \in k} k[x_P] = \bigoplus_{P \in k} k \oplus k \cdot x_P \oplus k \cdot x_P^2 \oplus \cdots\,.
\]
\end{remark}

The proof of the remaining results in this section are left to Appendix \ref{appendix:proofs}. Recall that all coalgebras are coassociative, cocommutative and counital and that $V$ is finite-dimensional.

\begin{theorem}\label{theorem:main} The dereliction $d: {!}V \lto V$ is universal among linear maps from coalgebras to $V$. That is, if $C$ is a coalgebra and $\phi: C \lto V$ there is a unique morphism of coalgebras $\Phi: C \lto {!}V$ making the diagram
\begin{equation}\label{eq:defining_philift2}
\xymatrix@C+2pc@R+2pc{
C \ar[r]^-{\phi}\ar[dr]_-{\Phi} & V\\
& {!}V \ar[u]_-{d}
}
\end{equation}
commute.
\end{theorem}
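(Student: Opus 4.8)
The plan is to recognise the statement as an instance of Sweedler duality and to reduce it to the universal property of the symmetric algebra. Write $R = \Sym(V^*)$ as before and let $C^*$ be the linear dual of $C$ with its convolution product; since $C$ is cocommutative, $C^*$ is a commutative $k$-algebra. The first step is to observe that the residue pairing ${!}V \otimes_k R \lto k$, $\eta \otimes r \mapsto \Res(r\eta)$, together with Theorem \ref{theorem:lcwithcontmap}, identifies ${!}V = \bigoplus_P \LC(V,P)$ with the module $\Hom_k^{\cont}(R,k)$ of continuous functionals on $R$: under this identification $\eta$ is the functional $r \mapsto \Res(r\eta)$, the counit $\Res$ is evaluation at $1$, and the coproduct of Lemma \ref{lemma:lc_coalgebra} is dual to multiplication in $R$ (each $R/\mf{m}_P^i$ being a finite-dimensional algebra). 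Thus a coalgebra morphism $\Phi \colon C \to {!}V$ is the same datum as a bilinear form $(c,r) \mapsto \Res(r\,\Phi(c))$ on $C \times R$, and the aim is to show that the coalgebra condition on $\Phi$ translates into the assertion that $\psi(r) := \Res(r\,\Phi(-))$ is an algebra homomorphism $\psi \colon R \to C^*$.

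For existence I would start from $\phi$. Dualising gives $\phi^* \colon V^* \to C^*$, and since $C^*$ is commutative and $R = \Sym(V^*)$ is the free commutative algebra on $V^*$, this extends uniquely to an algebra homomorphism $\psi \colon R \to C^*$. Define $\Phi$ by $\Phi(c)(r) = \psi(r)(c)$. That $\Phi(c)$ is continuous uses the fundamental theorem of coalgebras: $c$ lies in a finite-dimensional subcoalgebra $D$, the surjection $C^* \to D^*$ is an algebra map, and $R \to C^* \to D^*$ has finite-dimensional image, so $\Phi(c)$ kills a finite-codimension ideal. Comultiplicativity is a direct unwinding: evaluating $\Delta_{{!}V}\Phi(c)$ on $r \otimes s$ gives $\Phi(c)(rs) = (\psi(r)\psi(s))(c) = \sum_{(c)} \psi(r)(c_{(1)})\,\psi(s)(c_{(2)})$, which equals $(\Phi \otimes \Phi)\Delta_C(c)$ evaluated on $r \otimes s$, while $\Phi(c)(1) = \psi(1)(c) = \varepsilon_C(c)$ handles the counit. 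Finally $d \circ \Phi = \phi$ follows from Lemma \ref{lemma:dereliction_describe} and the identification above, since $d(\Phi(c)) = \sum_i \Res(x_i\,\Phi(c))\,e_i = \sum_i \psi(x_i)(c)\,e_i$ and $\psi$ restricts to $\phi^*$ on $V^*$, giving $\psi(x_i)(c) = e_i^*(\phi(c))$, so the sum collapses to $\phi(c)$.

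For uniqueness, suppose $\Phi' \colon C \to {!}V$ is any coalgebra morphism with $d\Phi' = \phi$. Setting $\psi'(r)(c) := \Res(r\,\Phi'(c))$ and running the previous computation backwards, comultiplicativity of $\Phi'$ forces $\psi'$ to be multiplicative and the counit condition forces $\psi'(1) = 1$, so $\psi'$ is an algebra homomorphism $R \to C^*$. Evaluating $d\Phi' = \phi$ as above gives $\psi'(x_i)(c) = e_i^*(\phi(c)) = \phi^*(e_i^*)(c)$ for all $c$, so $\psi'$ agrees with $\phi^*$ on $V^* \subseteq R$. Because $V^*$ generates $R$ as a $k$-algebra, two algebra homomorphisms out of $R$ that agree on $V^*$ coincide, so $\psi' = \psi$ and hence $\Phi' = \Phi$.

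The step I expect to require the most care is the first one: checking that the residue pairing identifies ${!}V$ with $\Hom_k^{\cont}(R,k)$ compatibly with all three structures simultaneously — that the coproduct of Lemma \ref{lemma:lc_coalgebra} is dual to multiplication in $R$, that $\Res$ is evaluation at $1$, and that the decomposition $\bigoplus_P \LC(V,P)$ matches the splitting of a continuous functional into components supported at the finitely many points cut out by a finite-codimension ideal. This, together with the appeal to the fundamental theorem of coalgebras securing continuity of $\Phi(c)$, is where algebraic closedness of $k$ and the finiteness in the coalgebra hypotheses are genuinely used; once it is in place the universal property is exactly the adjunction between $(-)^*$ and the continuous dual, driven by the freeness of $\Sym(V^*)$.
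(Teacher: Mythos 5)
Your proposal is correct and follows essentially the same route as the paper: both identify ${!}V$ with $\Hom_k^{\cont}(\Sym(V^*),k)$ via the residue pairing (Lemma \ref{lemma:decomp_contmap}) and then obtain $\Phi$ by extending $\phi^*\colon V^* \to C^*$ to an algebra map out of $\Sym(V^*)$ using freeness, dualising back. The only differences are cosmetic — the paper reduces to finite-dimensional $C$ up front where you verify continuity of $\Phi(c)$ elementwise via the fundamental theorem of coalgebras, and you spell out the uniqueness argument that the paper leaves to the reference to Sweedler.
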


To describe the lifting $\Phi$ more explicitly, notice that for a coalgebra $C$ with coproduct $\Delta$ and counit $\varepsilon$ we may construct a map for any $l \ge 0$ from $C$ to $V^{\otimes l+1}$ as follows:
\begin{equation}\label{eq:stick_of_a_map}
\xymatrix@C+2pc{
C \ar[r]^-{\Delta^l} & C^{\otimes (l+1)} \ar[r]^-{\phi^{\otimes (l+1)}} & V^{\otimes (l+1)}\,.
}
\end{equation}
For $l = 0$ this is just $\phi$. Our convention is that $\Delta^{-1}$ denotes the counit so that \eqref{eq:stick_of_a_map} also makes sense for $l = -1$. For any $\eta \in C$ we get an element
\begin{equation}\label{eq:sequence_of_powers}
\sum_{l \ge -1} \phi^{\otimes (l+1)} \circ \Delta^l(\eta) = ( \varepsilon(\eta), \phi(\eta), \phi^{\otimes 2} \Delta(\eta), \ldots ) \in \prod_{l \ge 0} (V^{\otimes l})^{\mathfrak{S}_l}
\end{equation}
where $(V^{\otimes l})^{\mathfrak{S}_l}$ denotes tensors invariant under the action of the symmetric group. Any element of $\Sym(V^*)$ may be contracted against such a sequence to yield a scalar. With the notation of the theorem:

\begin{proposition}\label{prop:respairing} For $\eta \in C$, $\Phi(\eta)$ is the unique element of ${!}V$ with the property that
\begin{equation}\label{eq:respairing}
\Res\,( f \Phi(\eta) ) = f\; \lrcorner \;\sum_{l \ge -1} \phi^{\otimes (l+1)} \Delta^l(\eta)
\end{equation}
for every $f \in \Sym(V^*)$.
\end{proposition}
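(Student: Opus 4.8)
The plan is to reduce \eqref{eq:respairing} to a coalgebra-internal identity on ${!}V$ plus a separation argument. Abbreviate $\langle \xi, f\rangle := \Res(f\xi)$ for $\xi\in{!}V$ and $f\in R=\Sym(V^*)$. Since $\Phi$ is a morphism of coalgebras with $d\circ\Phi=\phi$ (Theorem \ref{theorem:main}), one has $\phi^{\otimes(l+1)}\Delta^l_C(\eta)=d^{\otimes(l+1)}\Delta^l_{{!}V}\Phi(\eta)$ for every $l\ge-1$ (the case $l=-1$ being the counit identity $\varepsilon_C=\Res\circ\Phi$). Hence the right-hand side of \eqref{eq:respairing} equals $f\lrcorner\sum_{l\ge-1}d^{\otimes(l+1)}\Delta^l(\xi)$ with $\xi=\Phi(\eta)$, and it suffices to prove
\begin{equation*}
\Res(f\xi)=f\lrcorner\sum_{l\ge-1}d^{\otimes(l+1)}\Delta^l(\xi)\qquad(\xi\in{!}V,\ f\in R)
\end{equation*}
together with the fact that $\Phi(\eta)$ is the only element of ${!}V$ inducing the prescribed functional $f\mapsto\Res(f\Phi(\eta))$.

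Both sides of the displayed identity are linear in $f$, and $R$ is spanned by products of linear forms $g_1\cdots g_m$ with $g_j\in V^*\subseteq R$, so I would treat $f=g_1\cdots g_m$. Two facts drive the computation. First, the residue pairing carries the coproduct to multiplication: $\Res(fg\cdot\xi)=\sum\Res(f\xi_{(1)})\Res(g\xi_{(2)})$ where $\Delta\xi=\sum\xi_{(1)}\otimes\xi_{(2)}$. By linearity this reduces to $\xi=\ket{\nu_1,\ldots,\nu_s}_P$, where Lemma \ref{lemma:residue_differentiates} rewrites the left side as $\partial_{\nu_1}\cdots\partial_{\nu_s}(fg)|_{x=P}$; the general Leibniz rule for the commuting operators $\partial_{\nu_i}$ expands this over subsets $I\subseteq\{1,\ldots,s\}$, and Lemma \ref{lemma:residue_differentiates} together with the coproduct formula \eqref{eq:lc_coalgebra_1} identifies the result with the right side. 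Second, on linear forms $\Res(g\xi)=g(d\xi)$ for $g\in V^*$, which is immediate from $d\xi=\sum_i\Res(x_i\xi)e_i$. Iterating the first fact and using coassociativity gives $\Res((g_1\cdots g_m)\xi)=\big(\Res(g_1\cdot)\otimes\cdots\otimes\Res(g_m\cdot)\big)(\Delta^{m-1}\xi)$; substituting the second into each tensor factor rewrites this as $(g_1\otimes\cdots\otimes g_m)(d^{\otimes m}\Delta^{m-1}\xi)$. As $d^{\otimes m}\Delta^{m-1}\xi$ is symmetric and $f$ is homogeneous of degree $m$, this is exactly the single surviving term $f\lrcorner d^{\otimes m}\Delta^{m-1}\xi$ of the contraction, which closes the identity.

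For uniqueness I would show that $\xi\in{!}V$ with $\Res(f\xi)=0$ for all $f\in R$ must vanish. Writing $\xi=\sum_P\eta_P$ as a finite sum with $\eta_P\in\LC(V,P)$, the Chinese Remainder Theorem supplies, for each point of the support, polynomials prescribed modulo a high power of one $\mf{m}_P$ and vanishing modulo high powers of the others; since a given generalised fraction is annihilated by some power of $\mf{m}_P$, this isolates $\Res_P(g\,\eta_P)=0$ for all $g$, whence $\eta_P=0$ by the injectivity in Theorem \ref{theorem:lcwithcontmap}. Applied to the difference of any two candidates, this yields the uniqueness clause. I expect the main obstacle to be the first fact above: although it collapses to the Leibniz rule once one is on the basis \eqref{eq:defn_kets}, it is the conceptual heart — it expresses that under the residue pairing ${!}V$ is the finite dual coalgebra of $R$, with coproduct dual to multiplication — and matching the normalisation of the contraction $\lrcorner$ to this pairing is the one point demanding care.
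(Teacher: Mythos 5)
Your proof is correct, but it takes a genuinely different route from the paper's. The paper reduces to $C$ finite-dimensional and unwinds the construction of $\Phi$ from the proof of Theorem \ref{theorem:main}: $\Phi = \psi_2^*$ is the dual of the algebra map $\Sym(V^*)/I \to C^*$ induced by $\phi^*$, so for a monomial $f = \kappa_1 \cdots \kappa_q$ the value $\Phi(\eta)(f)$ is the convolution product $\psi_1(\kappa_1) \bullet \cdots \bullet \psi_1(\kappa_q)$ evaluated at $\eta$, which is literally the contraction on the right of \eqref{eq:respairing}; uniqueness is absorbed into the isomorphism \eqref{eq:decomp_contmap2}. You instead never open up $\Phi$: you use only that it is a counital coalgebra morphism with $d \circ \Phi = \phi$ to transport the right-hand side into ${!}V$, and then establish the reproducing identity $\Res(f\xi) = f \lrcorner \sum_{l \ge -1} d^{\otimes(l+1)}\Delta^l(\xi)$ on ${!}V$ itself via the Leibniz rule on the ket basis, finishing with a Chinese-remainder separation argument for uniqueness. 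Your route avoids the reduction to finite-dimensional $C$ entirely and isolates an identity on ${!}V$ of independent interest; the paper's route is shorter because the algebra-homomorphism property of $\psi_1$ does the combinatorial work that your iterated Fact~1 does by hand. Two economies you could have taken: your Fact~1 is essentially the definition of the coproduct in Lemma \ref{lemma:lc_coalgebra} (it is constructed as the dual of multiplication on $R/\mf{m}^i$ under the pairing of Theorem \ref{theorem:lcwithcontmap}), and your uniqueness step is exactly the injectivity of \eqref{eq:decomp_contmap2} already proved in Lemma \ref{lemma:decomp_contmap}, so both could be cited rather than re-derived. Your normalisation of the contraction (no factorials, degree-$m$ part of $f$ against the degree-$m$ tensor) matches the convention the paper uses in \eqref{eq:blahdyblah}, so that point of care is resolved correctly.
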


Note that in \eqref{eq:respairing} we use the $\Sym(V^*)$-module structure on ${!} V$ which follows from its definition as a local cohomology module over $R = \Sym(V^*)$. Given a set $X$ we denote the set of all partitions of the set by $\cat{P}_X$.

\begin{theorem}\label{theorem:describe_lifting} Let $W, V$ be finite-dimensional vector spaces and $\phi: {!}W \lto V$ a linear map. The unique lifting of $\phi$ to a morphism of coalgebras $\Phi: {!}W \lto {!}V$ is defined for vectors $P, \nu_1,\ldots,\nu_s \in W$ by
\begin{equation}\label{eq:describe_lift}
\Phi \ket{ \nu_1, \ldots, \nu_s }_P = \sum_{C \in \cat{P}_{\{1,\ldots,s\}}} \Big|\, \phi \ket{\nu_{C_{1}} }_P\,, \ldots \,, \phi \ket{ \nu_{C_{l}}}_P \Big\rangle_Q
\end{equation}
where $Q = \phi \vacu_P$ and $C = (C_1,\ldots,C_l)$.
\end{theorem}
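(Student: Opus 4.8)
The plan is to verify the proposed formula against the residue-pairing characterisation in Proposition \ref{prop:respairing}, which is cleaner than checking comultiplicativity directly. Write $\Psi\ket{\nu_1,\ldots,\nu_s}_P$ for the right-hand side of \eqref{eq:describe_lift}; by construction it lies in the single summand $\LC(V,Q)\subseteq{!}V$ with $Q=\phi\ket{0}_P$. Since Proposition \ref{prop:respairing} characterises $\Phi\ket{\nu_1,\ldots,\nu_s}_P$ as the \emph{unique} $\zeta\in{!}V$ with $\Res(f\zeta)=f\contract\sum_{l\ge-1}\phi^{\otimes(l+1)}\Delta^l\ket{\nu_1,\ldots,\nu_s}_P$ for all $f\in\Sym(V^*)$, it suffices to prove $\Psi\ket{\nu_1,\ldots,\nu_s}_P$ satisfies this identity. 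By linearity I may take $f=\xi_1\cdots\xi_m$ with $\xi_i\in V^*$, so that $\contract$ is the aligned pairing $f\contract(v_1\otimes\cdots\otimes v_m)=\xi_1(v_1)\cdots\xi_m(v_m)$, which is well defined on symmetric tensors; its normalisation is pinned down by the group-like case $\eta=\ket{0}_P$, where it must reproduce $\Res_Q(f\ket{0}_Q)=f(Q)$.

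For the left-hand side I would expand via Lemma \ref{lemma:residue_differentiates}, writing $w_S:=\phi\ket{\nu_S}_P\in V$ for $S\subseteq\{1,\ldots,s\}$ (so $w_\emptyset=Q$), so that each block of a partition contributes a directional derivative:
\[
\Res\big(f\,\Psi\ket{\nu_1,\ldots,\nu_s}_P\big)=\sum_{C\in\cat{P}_{\{1,\ldots,s\}}}\partial_{w_{C_1}}\cdots\partial_{w_{C_p}}(f)\big|_{x=Q}\,.
\]
Applying the Leibniz rule to $f=\xi_1\cdots\xi_m$, each derivation $\partial_{w_{C_a}}$ either differentiates one factor $\xi_i$, producing $\xi_i(w_{C_a})$, or else that factor survives and is evaluated at $Q$; recording which factor each block hits by an injection $h$, the left side becomes
\[
\sum_{C\in\cat{P}_{\{1,\ldots,s\}}}\ \sum_{h:\,C\hookrightarrow\{1,\ldots,m\}}\ \prod_{a}\xi_{h(C_a)}(w_{C_a})\prod_{i\notin\im h}\xi_i(Q)\,,
\]
a sum over unordered partitions and \emph{arbitrary} injections.

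For the right-hand side I would first identify $\Delta^l\ket{\nu_1,\ldots,\nu_s}_P$ with $\sum \bigotimes_{j=0}^{l}\ket{\nu_{I_j}}_P$ over ordered set compositions $\{1,\ldots,s\}=I_0\sqcup\cdots\sqcup I_l$ into $l+1$ possibly empty blocks (immediate from \eqref{eq:lc_coalgebra_1} by coassociativity). Only the $l=m-1$ term survives contraction with a degree-$m$ form, and applying $\phi$ and the aligned pairing yields $\sum_{(I_0,\ldots,I_{m-1})}\prod_{j}\xi_{j+1}(w_{I_j})$, with empty blocks contributing factors $\xi_{j+1}(Q)$. Reading the nonempty blocks left to right as an ordered partition into nonempty sets, and recording their positions by the \emph{increasing} injection $g$ (the empty positions giving the $\xi_i(Q)$ factors), converts this into exactly the same summand $\prod_a\xi_{g(a)}(w_{C_a})\prod_{i\notin\im g}\xi_i(Q)$ summed over ordered partitions into nonempty blocks and increasing injections.

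The crux is then the purely combinatorial identity equating the two expressions: summing $\prod_a\xi_{h(C_a)}(w_{C_a})\prod_{i\notin\im h}\xi_i(Q)$ over (unordered partition, arbitrary injection) equals the same sum over (ordered partition into nonempty blocks, increasing injection). This follows from the bijection that orders the blocks of a partition by the increasing values of $h$: since $h$ is injective its values are distinct, so each pair $(C,h)$ corresponds to a unique ordered-partition-plus-increasing-injection with identical summand, and conversely. I expect this reindexing — the translation between the shuffle/ordered-composition structure produced by iterating the coproduct and the unordered-partition structure of \eqref{eq:describe_lift}, essentially an instance of the exponential formula — to be the only real content; the degenerate case $m=0$ recovers $\varepsilon\ket{\nu_1,\ldots,\nu_s}_P=\delta_{s,0}$ by the same bookkeeping. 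With the identity established, Proposition \ref{prop:respairing} forces $\Psi=\Phi$.
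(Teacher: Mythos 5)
Your proposal is correct, and it shares the paper's overall strategy --- reduce via Proposition \ref{prop:respairing} and Lemma \ref{lemma:residue_differentiates} to an identity between the contraction pairing $\mathscr{C}(f,\eta) = f \contract \sum_{l\ge -1}\phi^{\otimes(l+1)}\Delta^l(\eta)$ and the partition sum of directional derivatives evaluated at $Q$ --- but you prove that central identity by a genuinely different method. The paper establishes it by induction on $\deg f$, showing that both quantities satisfy the same recursion $\mathscr{E}(x_e g,\ket{\nu_1,\ldots,\nu_s}) = \sum_I \phi\ket{\nu_I}_e\,\mathscr{E}(g,\ket{\nu_{I^c}})$, obtained by peeling off one linear factor of $f$ at a time (for $\mathscr{C}$ by splitting off the first tensor factor of the iterated coproduct, for $\mathscr{D}$ by commuting $x_e$ past the differential operators). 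You instead expand both sides in closed form for a monomial $f=\xi_1\cdots\xi_m$: the residue side via Leibniz as a sum over pairs (unordered partition, arbitrary injection of its blocks into $\{1,\ldots,m\}$), the coproduct side as a sum over ordered compositions of $\{1,\ldots,s\}$ into $m$ possibly empty blocks, i.e.\ pairs (ordered partition into nonempty blocks, increasing injection), and match the two by sorting blocks along the injection. Both arguments are complete; the paper's recursion avoids all bookkeeping with injections and compositions, while your bijective argument exposes the combinatorial content (the exponential-formula reindexing between iterated coproducts and set partitions) in a single step rather than distributing it across an induction. The one place a reader would want an explicit word of justification is the injectivity of $h$: it holds because two first-order derivations landing on the same linear factor $\xi_i$ annihilate it, which is what makes your two index sets literally coincide.
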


In the formula \eqref{eq:describe_lift} the right hand side uses the notation $\nu_I$ of \eqref{eq:nu_I} for a set of indices $I$. Observe that as a special case of this formula, $\Phi \vacu_P = \vacu_Q$.

\begin{corollary}\label{corollary:bang_func} Let $\psi: W \lto V$ be a morphism of finite-dimensional vector spaces. The associated morphism of coalgebras $!\psi: {!}W \lto {!}V$ is given by
\begin{equation}\label{eq:bang_func}
!\psi \ket{\nu_1, \ldots, \nu_s}_P = \Big|\, \psi(\nu_{1}), \ldots, \psi(\nu_{s}) \Big\rangle_{\psi(P)}
\end{equation}
\end{corollary}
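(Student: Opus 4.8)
The plan is to recognise ${!}\psi$ as an instance of the universal lifting from Theorem \ref{theorem:main} and then read the answer directly off the explicit formula in Theorem \ref{theorem:describe_lifting}. By definition the functorial map ${!}\psi$ is the unique coalgebra morphism $\Phi: {!}W \lto {!}V$ lifting the composite $\phi = \psi \circ d : {!}W \lto W \lto V$ through the dereliction $d: {!}V \lto V$; that is, $\Phi$ is exactly the lift fitting into the universal triangle \eqref{eq:defining_philift2} for this particular choice of $\phi$. So it suffices to evaluate the right-hand side of \eqref{eq:describe_lift} when $\phi = \psi \circ d$.

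First I would compute the building blocks $\phi\ket{\nu_I}_P$ appearing in \eqref{eq:describe_lift} using Lemma \ref{lemma:dereliction_describe}. Since $d$ sends $\ket{0}_P \mapsto P$ and $\ket{\nu}_P \mapsto \nu$, while annihilating every multi-particle state $\ket{\nu_{i_1},\ldots,\nu_{i_k}}_P$ with $k > 1$, the composite satisfies $\phi\ket{0}_P = \psi(P)$ (so that $Q = \psi(P)$), $\phi\ket{\nu_i}_P = \psi(\nu_i)$, and $\phi\ket{\nu_I}_P = 0$ whenever $\abs{I} \ge 2$. This vanishing collapses the partition sum in \eqref{eq:describe_lift}: a term indexed by $C \in \cat{P}_{\{1,\ldots,s\}}$ is nonzero only if every block of $C$ is a singleton, as any block of size $\ge 2$ contributes a zero factor to the tensor product. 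There is exactly one such partition, namely $C = \{\{1\},\ldots,\{s\}\}$, of length $l = s$, whose contribution is $\psi(\nu_1) \otimes \cdots \otimes \psi(\nu_s) \cdot \ket{0}_{\psi(P)}$.

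Finally I would identify this surviving term with the claimed ket. By the definition \eqref{eq:s_particle_state} of $\ket{\cdots}_Q$ together with the action of a tensor via \eqref{eq:symvtoder}, acting with $\psi(\nu_1)\otimes\cdots\otimes\psi(\nu_s)$ on the vacuum $\ket{0}_{\psi(P)}$ is by definition $\ket{\psi(\nu_1),\ldots,\psi(\nu_s)}_{\psi(P)}$, which is precisely \eqref{eq:bang_func}. I do not expect a serious obstacle: once Theorem \ref{theorem:describe_lifting} is in hand the corollary is a direct specialisation, and the only point requiring care is the bookkeeping that $\phi = \psi \circ d$ kills all multi-particle kets, so that the all-singletons partition is the unique surviving index. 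The one genuinely conceptual step is the opening identification of ${!}\psi$ with the lift of $\psi \circ d$, which is forced by the universal property of $d$.
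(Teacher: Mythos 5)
Your proposal is correct and is precisely the argument the paper intends: the corollary is stated without proof as an immediate specialisation of Theorem \ref{theorem:describe_lifting} to $\phi = \psi \circ d$, where Lemma \ref{lemma:dereliction_describe} kills every block of size $\ge 2$ so that only the all-singletons partition survives and yields $\partial_{\psi(\nu_1)} \cdots \partial_{\psi(\nu_s)}\ket{0}_{\psi(P)} = \ket{\psi(\nu_1),\ldots,\psi(\nu_s)}_{\psi(P)}$. Your bookkeeping, including the identification $Q = \psi(P)$ and the opening appeal to the universal property to define ${!}\psi$, is exactly right.
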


Even in the case where $V, W$ are infinite-dimensional the same formulas \eqref{eq:describe_lift} and \eqref{eq:bang_func} still apply. This follows from the discussion in Section \ref{section:infinite_dim} below.

\subsection{The infinite-dimensional case}\label{section:infinite_dim}

Let $V$ be a $k$-vector space, not necessarily finite-dimensional. Let $\{ V_i \}_{i \in I}$ be the set of finite-dimensional subspaces, so that $V = \varinjlim_i V_i$. For an inclusion $V_i \subseteq V_j$ the induced morphism of coalgebras ${!}V_i \lto {!}V_j$ is described by Corollary \ref{corollary:bang_func} as
\[
!V_i \lto {!}V_j, \qquad \ket{\nu_1,\ldots,\nu_s}_P \longmapsto \ket{\nu_1,\ldots,\nu_s}_P\,.
\]
Direct limits in the category of coalgebras may be taken in the category of vector spaces, that is, the direct limit in the category of vector spaces may be canonically equipped with a coalgebra structure in such a way that it is the colimit in the category of coalgebras. We may therefore define a coalgebra
\[
!V = \varinjlim_i {!}V_i\,.
\]
As a vector space this is the set of equivalence classes of pairs $(W, \eta)$ consisting of a finite-dimensional $W \subseteq V$ and $\eta \in {!}W$. Thus elements of ${!} V$ may still be described as linear combinations of kets $\ket{\nu_1,\ldots,\nu_s}_P$ for $P, \nu_1,\ldots,\nu_s \in V$. The dereliction map ${!} V \lto V$ and counit ${!} V \lto k$ are induced out of the colimit by the corresponding maps on the ${!}V_i$.

\begin{proposition}\label{prop:infinite_v_lifting} The dereliction map $d: {!}V \lto V$ is universal among linear maps to $V$ from coalgebras.
\end{proposition}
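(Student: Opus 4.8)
The plan is to reduce the universal property in the infinite-dimensional case to the finite-dimensional Theorem \ref{theorem:main} by exploiting the fact that every element of a coalgebra lives in a finite-dimensional subcoalgebra. The key structural input is the \emph{fundamental theorem of coalgebras}: if $C$ is any coalgebra and $c \in C$, then $c$ is contained in a finite-dimensional subcoalgebra $C_c \subseteq C$. This is exactly the feature of coalgebras (dual to the fact that subalgebras generated by one element of an algebra can be infinite-dimensional) that makes the infinite-dimensional lifting tractable, and it is proved in \cite[\S 2]{sweedler}.

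First I would set up the data: let $C$ be a coalgebra with counit $\varepsilon$ and coproduct $\Delta$, and let $\phi: C \lto V$ be linear, with $V = \varinjlim_i V_i$ over its finite-dimensional subspaces. To construct $\Phi: C \lto {!}V$, I would argue that it suffices to define $\Phi$ on each finite-dimensional subcoalgebra and check compatibility. Given $c \in C$, choose a finite-dimensional subcoalgebra $C' \ni c$. Since $C'$ is finite-dimensional, $\phi(C')$ is a finite-dimensional subspace of $V$, hence $\phi(C') \subseteq V_i$ for some $i$. Restricting gives a linear map $\phi|_{C'}: C' \lto V_i$ between a coalgebra and a finite-dimensional vector space, so Theorem \ref{theorem:main} yields a unique coalgebra morphism $\Phi': C' \lto {!}V_i$ lifting $\phi|_{C'}$. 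Composing with the structural map ${!}V_i \lto {!}V$ into the colimit defines $\Phi$ on $c$.

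The main obstacle is \emph{well-definedness and independence of choices}: I must show that the resulting element $\Phi(c) \in {!}V$ does not depend on the chosen subcoalgebra $C'$ nor on the index $i$ with $\phi(C') \subseteq V_i$. For the choice of $i$, this follows because the liftings for $V_i \subseteq V_j$ are compatible via the transition maps ${!}V_i \lto {!}V_j$ described explicitly in the text preceding this proposition, together with the uniqueness clause of Theorem \ref{theorem:main} applied to $C'$. For the choice of subcoalgebra, given $c \in C' \cap C''$ one passes to $C' + C''$ (still finite-dimensional, still a subcoalgebra) and again invokes uniqueness in Theorem \ref{theorem:main} to see that the two liftings agree after mapping into ${!}V$; concretely, both restrict to the unique lift on the smaller subcoalgebra. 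Linearity of $\Phi$ then follows by testing on a common finite-dimensional subcoalgebra containing any finite set of elements, where $\Phi$ agrees with a single finite-dimensional lifting and is therefore linear.

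Finally I would verify the two remaining properties. That $\Phi$ is a morphism of coalgebras and satisfies $d \circ \Phi = \phi$ is checked elementwise: both conditions are statements about $\Delta$, $\varepsilon$, and $d$ applied to $\Phi(c)$, and each such statement involves only finitely many elements, so it can be verified inside a single ${!}V_i$ where it holds by Theorem \ref{theorem:main}. Uniqueness of $\Phi$ likewise reduces to the finite-dimensional case: if $\Psi$ is any coalgebra morphism with $d \circ \Psi = \phi$, then for $c \in C'$ the image $\Psi(C')$ lies in a finite-dimensional subcoalgebra of ${!}V$, hence in some ${!}V_j$, and $\Psi|_{C'}$ becomes a lift of $\phi|_{C'}$ which must coincide with $\Phi|_{C'}$ by the uniqueness in Theorem \ref{theorem:main}.
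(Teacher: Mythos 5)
Your proposal is correct and follows essentially the same route as the paper: the paper's own proof is a two-sentence sketch of exactly this argument (restrict $\phi$ to finite-dimensional subcoalgebras, factor through some $V_i$, lift via Theorem \ref{theorem:main}, and assemble), and you have simply filled in the well-definedness, compatibility, and uniqueness checks that the paper leaves implicit.
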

\begin{proof}
If $C$ is a coalgebra and $\phi: C \lto V$ a linear map, then for each finite-dimensional subcoalgebra $C' \subseteq C$ we may factor $\phi|_{C'}$ through a finite-dimensional subspace $V_i \subseteq V$ and the map $C' \lto V_i$ lifts to a coalgebra morphism $C' \lto {!} V_i \lto {!} V$. These assemble to a morphism of coalgebras $C \lto {!} V$ with the right property.
\end{proof}

\appendix

\section{Proofs}\label{appendix:proofs}

This appendix contains the proofs of some results in Section \ref{section:expmod}. The notation is as given there, so in particular we have a fixed finite-dimensional $k$-vector space $V$ of dimension $n$ and $R = \Sym(V^*)$. For each maximal ideal $\mf{m}$ there is a restriction map along $R \lto R_{\mf{m}}$ 
\begin{equation}\label{eq:restrict_map_cont}
\Hom_k^{\cont}(R_{\mf{m}},k) \lto \Hom_k^{\cont}(R,k)\,.
\end{equation}
The image is the subspace of all linear $\mu: R \lto k$ vanishing on a power of $\mf{m}$.

\begin{lemma}\label{lemma:decomp_contmap} The canonical map
\begin{equation}\label{eq:decomp_contmap}
\bigoplus_{\mf{m}} \Hom_k^{\cont}(R_{\mf{m}},k) \lto \Hom_k^{\cont}(R,k)
\end{equation}
is an isomorphism of $R$-modules, where $\mf{m}$ ranges over the maximal ideals of $R$. In particular, there is an isomorphism of coalgebras
\begin{gather}
{!} V \lto \Hom_k^{\cont}(R,k), \label{eq:decomp_contmap2}\\
\eta \longmapsto \Res\;( (-) \cdot \eta )\,.\nonumber
\end{gather}
\end{lemma}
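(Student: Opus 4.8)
The plan is to prove the $R$-module isomorphism \eqref{eq:decomp_contmap} by commutative algebra, and then to deduce the coalgebra statement \eqref{eq:decomp_contmap2} from it together with Theorem \ref{theorem:lcwithcontmap}. Everything rests on the structure of finite-codimension quotients of $R$: if $I \subseteq R$ has finite codimension then $V(I)$ is a finite set of maximal ideals $\mf{m}_1,\ldots,\mf{m}_r$ and $\prod_j \mf{m}_j^N \subseteq I$ for $N \gg 0$. Distinct maximal ideals are comaximal, hence so are their powers, so the Chinese Remainder Theorem presents $R/I$ as a finite product of local Artinian rings,
\[
R/I \cong \prod_{j=1}^r R/Q_j,
\]
with $Q_j$ the $\mf{m}_j$-primary component of $I$, which satisfies $\mf{m}_j^N \subseteq Q_j$. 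I will use throughout the orthogonal idempotents $e_1,\ldots,e_r$ of this product, lifted to $R$ and characterised by $e_j \equiv \delta_{ij} \pmod{\mf{m}_i^N}$.

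For surjectivity of \eqref{eq:decomp_contmap}, take $\mu \in \Hom_k^{\cont}(R,k)$ vanishing on such an $I$. Dualising the product decomposition gives $\Hom_k(R/I,k) = \bigoplus_j \Hom_k(R/Q_j,k)$, and the $j$-th summand consists of functionals vanishing on $Q_j$, hence on a power of $\mf{m}_j$, so it lies in the image of the restriction map \eqref{eq:restrict_map_cont} out of $\Hom_k^{\cont}(R_{\mf{m}_j},k)$. Concretely $\mu = \sum_j \mu(e_j \cdot -)$ writes $\mu$ as a finite sum of such components. For injectivity, suppose $\sum_j \mu_j = 0$ with each $\mu_j$ in the image of $\Hom_k^{\cont}(R_{\mf{m}_j},k)$ and vanishing on $\mf{m}_j^N$ for a common $N$. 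Using the module structure $(r\cdot\mu)(s) = \mu(rs)$, one checks that $e_{j_0}\cdot\mu_{j_0} = \mu_{j_0}$ while $e_{j_0}\cdot\mu_j = 0$ for $j \neq j_0$; multiplying the relation by $e_{j_0}$ forces $\mu_{j_0} = 0$. Since restriction and the idempotent decomposition are defined through multiplication in $R$, the resulting bijection is $R$-linear.

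Combining \eqref{eq:decomp_contmap} with the sum over $P \in V$ of the isomorphisms $\LC(V,P) \cong \Hom_k^{\cont}(R_{\mf{m}_P},k)$ of Theorem \ref{theorem:lcwithcontmap} yields the underlying linear bijection ${!}V \cong \Hom_k^{\cont}(R,k)$, $\eta \mapsto \Res((-)\cdot\eta)$. To see that this is a morphism of coalgebras, I note that both sides carry their coproduct by dualising an algebra structure on finite quotients: $\Hom_k^{\cont}(R,k) = \varinjlim_I \Hom_k(R/I,k)$ with coproduct dual to multiplication on $R/I$, while the coproduct on ${!}V$ is built the same way from the $R/\mf{m}_P^i$ (Lemma \ref{lemma:lc_coalgebra}). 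Because $R/I \cong \prod_j R/Q_j$ is a decomposition of \emph{algebras}, dualising produces a direct sum of coalgebras with no cross terms, which is exactly the coproduct matching $\bigoplus_P \LC(V,P)$.

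I expect the surjectivity step to be the main obstacle, since it is the point at which one must invoke the Artinian structure theory (equivalently, primary decomposition together with the Chinese Remainder Theorem) to split an arbitrary continuous functional across the finitely many points supporting its annihilator. Once the idempotents are available the injectivity and $R$-linearity claims are formal, and the coalgebra compatibility reduces to the standard fact that a finite product of algebras dualises to a coproduct of coalgebras.
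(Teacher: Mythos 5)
Your proof is correct and follows essentially the same route as the paper: surjectivity via primary decomposition of the largest ideal in $\Ker(\mu)$ together with the Chinese Remainder Theorem, and the coalgebra statement deduced from Theorem \ref{theorem:lcwithcontmap}. The only real variation is cosmetic: you extract injectivity by multiplying by the orthogonal idempotents of $R/\prod_j\mf{m}_j^N$, whereas the paper argues that a nonzero continuous functional cannot vanish on powers of two distinct maximal ideals; your idempotent argument is if anything slightly more explicit about why a vanishing \emph{sum} of components forces each component to vanish.
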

\begin{proof}
The isomorphism \eqref{eq:decomp_contmap2} is immediate from Theorem \ref{theorem:lcwithcontmap} once we establish the first claim. Since $R/\mf{m}^i \cong R_{\mf{m}}/\mf{m}^iR_{\mf{m}}$ the map \eqref{eq:restrict_map_cont} is injective. Given a continuous function $\mu: R \lto k$ we may define $I$ to be the largest ideal contained in $\Ker(\mu)$. Suppose that $\mu$ vanishes on some power of a maximal ideal $\mf{m}$, say $\mf{m}^i \subseteq I$. If $\mf{n}$ is another maximal ideal and $I \subseteq \mf{n}$ then $\mf{m}^i \subseteq \mf{n}$ implies $\mf{m} = \mf{n}$. Hence either $I \subseteq \mf{m}$ or $I = R$. This means that $\mu$ can only vanish on a power of a different maximal ideal if $\mu$ is identically zero, which proves that the map \eqref{eq:decomp_contmap} is injective.

So it only remains to prove that the map \eqref{eq:decomp_contmap} is surjective. This follows from the Chinese remainder theorem. Given $\mu$ let the ideal $I \subseteq R$ be defined as above. Then $V(I)$, the set of maximal ideals containing $I$, is $\{ \mf{m}_1,\ldots,\mf{m}_r \}$ for some $r$. The ideal $I$ has a minimal primary decomposition $I = \mf{q}_1 \cap \cdots \cap \mf{q}_r$ with $\mf{q}_i$ an $\mf{m}_i$-primary ideal, so $\mf{m}_i^{b_i} \subseteq \mf{q}_i$ for some $b_i$. It follows that $\mf{m}_1^{b_1} \cdots \mf{m}_r^{b_r} \subseteq I$. That is, $\mu$ factors through $R/\mf{m}_1^{b_1} \cdots \mf{m}_r^{b_r}$. By the Chinese remainder theorem there is an isomorphism of $k$-algebras
\[
R/\mf{m}_1^{b_1} \cdots \mf{m}_r^{b_r} \cong R/\mf{m}_1^{b_1} \oplus \cdots \oplus R/\mf{m}_r^{b_r}
\]
and so $\mu$ may be written as a sum of linear maps $R/\mf{m}_i^{b_i} \lto k$. Hence $\mu$ belongs to the image of \eqref{eq:decomp_contmap}, and the proof is complete.
\end{proof}

\begin{proof}[Proof of Theorem \ref{theorem:main}] This is well-known for $\Hom_k^{\cont}(R,k)$ \cite[Section 6]{sweedler} and in light of the isomorphism \eqref{eq:decomp_contmap2} it also holds for ${!} V$. We recall the proof: since $C$ is a direct limit of its finite-dimensional subcoalgebras (see \cite[\S 2.2]{sweedler}) we may reduce to the case of $C$ finite-dimensional so that $C^*$ is canonically a commutative $k$-algebra. Then the dual $\phi^*: V^* \lto C^*$ induces a morphism of $k$-algebras $\psi_1: \Sym(V^*) \lto C^*$. This map must vanish on some ideal of finite codimension $I$, and the factorisation we denote by $\psi_2: \Sym(V^*)/I \lto C^*$. The dual gives a morphism of coalgebras
\begin{equation}\label{eq:main3}
\xymatrix{
C \cong C^{**} \ar[r]^-{\psi_2^*} & \big( \Sym(V^*)/I \big)^*
}
\end{equation}
and since $\Hom_k( R/I, k )$ is a subcoalgebra of $\Hom^{\cont}_k(R,k)$ this defines the necessary lifting $\Phi: C \lto \Hom^{\cont}_k(R,k)$.
\end{proof}

\begin{proof}[Proof of Proposition \ref{prop:respairing}] This follows from the proof of Theorem \ref{theorem:main}. Again we may assume $C$ finite-dimensional, and in light of \eqref{eq:decomp_contmap2} it suffices to prove that for $\eta \in C$ the lifting $\Phi: C \lto \Hom^{\cont}_k(R,k)$ is defined by $\Phi( \eta )(f) = f \contract \sum_{l \ge -1} \phi^{\otimes(l+1)} \Delta^l( \eta )$.

We may assume that $f$ is a monomial $\kappa_1 \otimes \cdots \otimes \kappa_q$. Then, using Sweedler notation,
\begin{equation}\label{eq:blahdyblah}
f \contract \sum_{l \ge -1} \phi^{\otimes (l+1)} \Delta^l( \eta ) = \sum \kappa_1 \phi( \eta_{(1)} ) \cdots \kappa_q \phi( \eta_{(q)} )\,.
\end{equation}
On the other hand, using the notation of \eqref{eq:main3}
\begin{align*}
\Phi( \eta )(f) &= \psi_2^*( \eta )( f ) = \psi_2( \kappa_1 \otimes \cdots \otimes \kappa_q )( \eta )
= \left\{ \psi_1( \kappa_1 ) \bullet \cdots \bullet \psi_1( \kappa_q ) \right\}(\eta)
\end{align*}
where $\bullet$ denotes the product in $C^*$. But this agrees with the right hand side of \eqref{eq:blahdyblah}, since $\psi_1(\kappa_i) = \kappa_i \circ \phi$.
\end{proof}

\begin{proof}[Proof of Theorem \ref{theorem:describe_lifting}] For $V$ we take the coordinates\footnote{Using Remark \ref{remark_additive_iso} we can reduce to the case of $V$ one-dimensional, but this does not significantly simplify the proof.} given in Example \ref{example:coordinates_V}. We also drop the subscripts from kets. Using contraction against the tuple of \eqref{eq:sequence_of_powers} we define
\begin{gather*}
\mathscr{C}: \Sym(V^*) \times \LC( W, P ) \lto k\\
\mathscr{C}(f, \eta) = f \; \lrcorner \; \sum_{l \ge -1} \phi^{\otimes (l+1)} \Delta^l( \eta )\,.
\end{gather*}
We define a second function with the same domain and codomain by
\[
\mathscr{D}(f, \ket{\nu_1, \ldots, \nu_s}) = \sum_{C \in \cat{P}_{\{1,\ldots,s\}}} \phi \ket{\nu_{C_1}} \cdots \phi \ket{\nu_{C_l}}(f)\arrowvert_{x = Q}\,.
\]
Here $\phi \ket{\nu_{C_1}} \cdots \phi \ket{\nu_{C_l}}$ is an element of $V^{\otimes l}$ which is mapped to a differential operator on $R$. After acting with this operator on $f$, the resulting polynomial is evaluated at the point $Q$. The proof is divided into two steps.

\emph{Step 1} (Prove that $\mathscr{C} = \mathscr{D}$). There are some simple cases where this is obvious, for instance $\cat{C}(f, \vacu) = f( Q )$ and $\cat{C}( 1, \ket{\nu_1,\ldots,\nu_s} ) = \Res\;\ket{\nu_1,\ldots,\nu_s} = \delta_{s = 0}$ and the same is true of $\mathscr{D}$. We reduce to these cases by recursion formulas, as follows.

For any index $1 \le a \le n$ we will prove that for $\mathscr{E} \in \{ \mathscr{C}, \mathscr{D} \}$ and $g \in \Sym(V^*)$
\begin{equation}\label{eq:recur_formula}
\mathscr{E}( x_a g, \ket{\nu_1,\ldots,\nu_s} ) = \sum_{I \subseteq \{ 1,\ldots,s\}} \phi\ket{\nu_I}^a \mathscr{E}( g, \ket{\nu_{I^c}})
\end{equation}
where for $v \in V$ we write $v = \sum_a v^a e_a$ using the chosen basis $e_1,\ldots,e_n$ for $V$.

It then follows by induction on the degree of $f$ that $\mathscr{C} = \mathscr{D}$. First we prove the recursion identity \eqref{eq:recur_formula} for $\mathscr{C}$. In what follows $I$ always ranges over subsets of $\{ 1, \ldots, s \}$ including the empty set. Define $\eta = \ket{\nu_1,\ldots,\nu_s}$. Since
\[
\phi^{\otimes(l+1)} \Delta^l( \eta ) = \sum_I \phi \ket{\nu_I} \otimes \phi^{\otimes l} \Delta^{l-1} \ket{\nu_{I^c}}
\]
we have, using that $f = x_a g$ has degree $\ge 1$,
\begin{align*}
\mathscr{C}( x_a g, \eta ) &= f \contract \sum_{l \ge 0} \phi^{\otimes (l+1)} \Delta^l ( \eta )\\
&= f \contract \Big( \sum_{l \ge 1} \phi^{\otimes (l+1)} \Delta^l( \eta ) + \phi( \eta ) \Big)\\
&= f \contract \phi( \eta ) + f \contract \sum_{l \ge 1} \sum_I \phi \ket{\nu_I} \otimes \phi^{\otimes l} \Delta^{l-1} \ket{\nu_{I^c}}
\end{align*}
Now if $g_0$ denotes the constant term of $g$, $f \contract \phi( \eta ) = g_0 \cdot \phi( \eta )^a$, and
\begin{align*}
f \contract \sum_{l \ge 1} \sum_I \phi \ket{\nu_I} \otimes \phi^{\otimes l} \Delta^{l-1} \ket{\nu_{I^c}} &= \sum_I \sum_{l \ge 0} ( x_a \contract \phi\ket{\nu_I} ) \cdot ( g \contract \phi^{\otimes (l+1)} \Delta^l \ket{\nu_{I^c}})\\
&= \sum_I \sum_{l \ge 0} \phi\ket{\nu_I}^a \cdot ( g \contract \phi^{\otimes (l+1)} \Delta^l \ket{\nu_{I^c}})
\end{align*}
so that
\begin{align*}
\mathscr{C}( x_a g, \eta ) &= g_0 \phi( \eta )^a + \sum_{I \neq \{1,\ldots,s\}} \phi \ket{\nu_I}^a \cdot ( g \contract \sum_{l \ge -1} \phi^{\otimes (l+1)} \Delta^l \ket{\nu_{I^c}})\\
&+ \phi( \eta )^a \cdot ( g \contract \sum_{l \ge 0} \phi^{\otimes (l+1)} \Delta^l \ket{\nu_{I^c}})\\
&= \phi( \eta )^a \mathscr{C}(g, \vacu) + \sum_{I \neq \{1,\ldots,s\}} \phi \ket{\nu_I} \mathscr{C}(g, \ket{\nu_{I^c}})\\
&= \sum_{I \subseteq \{ 1,\ldots,s\}} \phi\ket{\nu_I}^a \mathscr{C}( g, \ket{\nu_{I^c}})\,.
\end{align*}
This proves the recursion identity for $\mathscr{C}$. 

For $\mathscr{D}$ we may compute as follows. A sum over the multiindex $\bold{i}$ stands for a sum over indices $i_1,\ldots,i_l$, where $l$ stands for the length of a particular partition $C \in \cat{P} = \cat{P}_{\{1,\ldots,s\}}$. For such a partition we define the differential operator
\[
D( C, \eta ) = \phi\ket{\nu_{C_1}} \cdots \phi\ket{ \nu_{C_l} }\,.
\]
Then we may compute that
\begin{align*}
D( C, \eta )( x_a g ) &= \sum_{\bold{i}} \phi\ket{\nu_{C_1}}_{i_1} \cdots \phi\ket{\nu_{C_l}}_{i_l} \frac{\partial}{\partial x_{i_1}} \cdots \frac{\partial}{\partial x_{i_l}}( x_a g )\\
&= \sum_{\bold{i}} \phi\ket{\nu_{C_1}}_{i_1} \cdots \phi\ket{\nu_{C_l}}_{i_l} \frac{\partial}{\partial x_{i_1}} \cdots \frac{\partial}{\partial x_{i_{l-1}}}( \delta_{a = i_l} g + x_a \frac{\partial}{\partial x_{i_l}} g )\\
&= \cdots\\
&= \sum_{\bold{i}} \phi\ket{\nu_{C_1}}_{i_1} \cdots \phi\ket{\nu_{C_l}}_{i_l} \Big\{ \sum_{j=1}^l \delta_{i_j = a}  \frac{\partial}{\partial x_{i_1}} \cdots \widehat{ \frac{\partial}{\partial x_{i_j}}} \cdots \frac{\partial}{\partial x_{i_l}}(g)\\
&+ x_a \frac{\partial}{\partial x_{i_1}} \cdots \frac{\partial}{\partial x_{i_l}}(g) \Big\}\\
&= x_a D( C, \eta )(g) + \sum_{j=1}^l \phi \ket{\nu_{C_j}}^a D( C \setminus C_j, \ket{\nu_{C_j^c}} )(g)
\end{align*}
When we sum over all partitions and evaluate at $Q$,
\begin{align*}
\mathscr{D}( x_a g, \eta ) &= Q^a \mathscr{D}( g, \eta ) + \sum_C \sum_{j=1}^l \phi \ket{\nu_{C_j}}^a D( C \setminus C_j, \ket{\nu_{C_j^c}})(g)|_{x = Q}\\
&= Q^a \mathscr{D}( g, \eta ) + \sum_{\emptyset \neq I \subseteq \{1,\ldots,s\}} \sum_{C \in \cat{P}_{I^c}} \phi\ket{\nu_I}^a D( C, \ket{\nu_{I^c}} )(g)|_{x = Q}\\
&= Q^a \mathscr{D}( g, \eta ) + \sum_{\emptyset \neq I \subseteq \{1,\ldots,s\}} \phi \ket{\nu_I}^a \mathscr{D}(g, \ket{\nu_{I^c}})\\
&= \sum_I \phi\ket{\nu_I}^a \mathscr{D}(g, \ket{\nu_{I^c}})\,.
\end{align*}
This proves the recursion identity for $\mathscr{D}$ also, whence $\mathscr{C} = \mathscr{D}$.

\emph{Step 2.} By Proposition \ref{prop:respairing}, $\Phi( \eta ) \in {!}V$ is unique such that $\Res\;( f \Phi( \eta ) ) = \mathscr{C}(f, \eta)$ for every $f \in \Sym(V^*)$. But given our identification of $\mathscr{C}$ with $\mathscr{D}$ and Lemma \ref{lemma:residue_differentiates}, it is clear that this unique element $\Phi( \eta )$ must be as described in the statement of the theorem.
\end{proof}

\section{Sweedler's approach}\label{section:compare_sweedler}

In this appendix we show how to derive the presentation of the universal cocommutative coalgebra ${!} V$ in Lemma \ref{lemma:relate_to_fock} from the structure theory of coalgebras in \cite{sweedler}. Throughout $k$ is an algebraically closed field of any characteristic, and coalgebras are counital and coassociative but not necessarily cocommutative.

Let $(C, \Delta, \varepsilon)$ be a nonzero coalgebra. Recall from \cite[Chapter 8]{sweedler} that $C$ is \emph{irreducible} if any two non-zero subcoalgebras have a non-zero intersection, \emph{simple} if it has no non-zero proper subcoalgebras and \emph{pointed} if all simple subcoalgebras of $C$ are $1$-dimensional. The symmetric algebra $\Sym(V)$ is a pointed irreducible cocommutative coalgebra. By \cite[Lemma 8.0.1]{sweedler} since $k$ is algebraically closed, any cocommutative coalgebra is pointed. A subcoalgebra $D$ of $C$ is an \emph{irreducible component} (IC) if it is a maximal irreducible subcoalgebra. By \cite[Theorem 8.0.5]{sweedler} any cocommutative coalgebra $C$ is the direct sum of its irreducible components.

If $0 \neq c \in C$ with $\Delta(c) = c \otimes c$ then $\varepsilon(c) = 1$ and
\[
G(C) = \{ 0 \neq c \in C \l \Delta(c) = c \otimes c \}\,.
\]

In \cite[Theorem 6.4.3]{sweedler} the commutative cofree coalgebra $C(V)$ on a vector space $V$ is constructed as a subcoalgebra of the cofree coalgebra. Moreover it is shown that if $V_1, V_2$ are vector spaces then there is a canonical isomorphism of coalgebras \cite[Theorem 6.4.4]{sweedler}
\[
C(V_1 \oplus V_2) \cong C(V_1) \otimes C(V_2)\,.
\]
This may be used to make $C(V)$ into a Hopf algebra: the maps
\begin{align*}
V \oplus V \lto V, &\qquad (v,w) \longmapsto v + w\,,\\
V \lto V, &\qquad v \longmapsto -v\,,\\
0 \lto V, &\qquad 0 \longmapsto 0\,.
\end{align*}
induce coalgebra morphisms
\begin{gather*}
M: C(V) \otimes C(V) \cong C(V \oplus V) \lto C(V)\,,\\
S: C(V) \lto C(V),\\
u: k \cong C(0) \lto C(V)
\end{gather*}
which make $C(V)$ into a commutative Hopf algebra with antipode $S$ \cite[Theorem 6.4.8]{sweedler}. The universal property of $C(V)$ yields an isomorphism of vector spaces
\begin{equation}\label{eq:blah_sweedler}
G(C(V)) \cong \Hom_{\operatorname{Coalg}}(k, C(V)) \cong \Hom_k(k, V) \cong V\,.
\end{equation}
For $P \in V$ we write $\vacu_P$ for the corresponding element of $G(C(V))$. In particular the image of the unit map $u$ is $k \cdot \vacu_0 \in C(V)$. We denote by $C(V)^P$ the irreducible component of $C(V)$ containing $\vacu_P$. Then by \cite[Proposition 8.1.2]{sweedler}, using the identification of \eqref{eq:blah_sweedler}, there is a canonical isomorphism of coalgebras
\[
C(V) \cong \bigoplus_{P \in V} C(V)^P\,.
\]
It remains to show that $C(V)^P \cong \Sym(V)$ as coalgebras (at least in the case of characteristic zero) and to derive the explicit form of the universal map $C(V) \lto V$.

In \cite[Chapter 12]{sweedler} there is a construction of the universal pointed irreducible coalgebra $\sh(V)$ (the \emph{shuffle algebra}) mapping to a finite-dimensional space $V$ \cite[Lemma 12.0.1]{sweedler}. In \cite[Section 12.2]{sweedler} the cofree pointed irreducible cocommutative coalgebra $B(V)$ over $V$ is defined as the maximal cocommutative subcoalgebra of $\sh(V)$. This is a graded sub-bialgebra of $\sh(V)$ and hence $B(V)_0 = k$ and $B(V)_1 = V$. Next we show to relate $B(V)$ to the irreducible components $C(V)^P$ of $C(V)$. 

The projection onto degree one is a map $B(V) \lto V$ which must factor as
\[
\xymatrix{
B(V) \ar[r]^-{\iota} & C(V) \ar[r] & V
}
\]
for some morphism of coalgebras $\iota$. By \cite[Theorem 12.2.6]{sweedler} this map $\iota$ is injective, and since $\vacu_0$ lies in the image it must induce an isomorphism of coalgebras $B(V) \cong C(V)^0$. At this point we must use the structure theory of Hopf modules from \cite[Chapter 4]{sweedler}, as described on \cite[p.175]{sweedler}, to see that there is an isomorphism of coalgebras
\[
C(V)^0 \otimes_k kG \lto C(V), \qquad h \otimes g \longmapsto h \cdot g
\]
where $kG$ is the linear span of $G = G(C(V))$ within $C$ with its (trivial) coalgebra structure. Here $h \cdot g$ denotes the product in the Hopf algebra. 

This gives us a canonical isomorphism of coalgebras $C(V)^0 \cong C(V)^P$ for any $P \in V$. With $C = C(V)^P$ the map $C \lto C(V) \lto V$ induces $C^+ \lto V$ (where $C^+$ is the kernel of the counit) which by the universal property of $B(V)$ \cite[Theorem 12.2.5]{sweedler} can be proven to be induced on $C^+$ by the inverse of the isomorphism $B(V) \cong C$ composed with $B(V) \lto V$. Thus it only remains to describe $C \lto V$ on $1 \in k \cong C_0$. But it follows from \eqref{eq:blah_sweedler} that the value of this map on $1$ must be $P$ itself.

So far everything we have discussed is characteristic free. If we specialise to characteristic zero then $B(V)$ is isomorphic as a coalgebra to $\Sym(V)$ by the results of \cite[Chapter 12]{sweedler} and we recover the description of $C(V)$ in Lemma \ref{lemma:relate_to_fock}. That is, ${!} V \cong C(V)$ matches $\LC(V,P)$ with the component $C(V)^P$. 

\bibliographystyle{amsalpha}
\providecommand{\bysame}{\leavevmode\hbox to3em{\hrulefill}\thinspace}
\providecommand{\href}[2]{#2}

\end{document}